\theoremstyle{plain}
\newtheorem{theo}{Theorem}[section]
\newtheorem{lemm}[theo]{Lemma}
\newtheorem{coro}[theo]{Corollary}
\newtheorem{prop}[theo]{Proposition}
\theoremstyle{definition}
\newtheorem{defi}[theo]{Definition}
\newtheorem{prob}[theo]{Problem}
\theoremstyle{remark}
\newtheorem{rema}[theo]{Remark}
\newtheorem{exam}[theo]{Example}
\numberwithin{equation}{theo}
\newcommand{\N}{\mathbb{N}}
\newcommand{\R}{\mathbb{R}}
\newcommand{\id}{\mathrm{id}}
\newcommand{\bbE}{\mathbb{E}}
\newcommand{\Lin}{\mathcal{L}}
\DeclareMathOperator{\End}{\mathcal{L}}
\DeclarePairedDelimiter\paren{(}{)}
\DeclarePairedDelimiter\coint{[}{)}
\DeclarePairedDelimiter\set{\{}{\}}
\DeclarePairedDelimiter\abs{\lvert}{\rvert}
\DeclarePairedDelimiter\norm{\lVert}{\rVert}
\DeclarePairedDelimiterX\normx[2]{\lVert}{\rVert_{#2}}{#1}
\renewcommand{\bar}{\overline}
\renewcommand{\tilde}{\widetilde}
\renewcommand{\mid}{:}
\begin{document}
\title[Uniform stability of linear evolution equations]{Uniform stability of linear evolution equations, with applications to parallel transports}
\author{Tim Kirschner}
\address{Lehrstuhl für Komplexe Analysis\\ Universität Bayreuth}
\email{tim.kirschner@uni-bayreuth.de}
\urladdr{http://timkirschner.tumblr.com}
\date\today
\subjclass[2010]{Primary 34G10, 47D06; Secondary 53B05}
\begin{abstract}
I prove the bistability of linear evolution equations $x' = A(t)x$ in a Banach space $E$, where the operator-valued function $A$ is of the form $A(t) = f'(t)G(t,f(t))$ for a binary operator-valued function $G$ and a scalar function $f$. The constant that bounds the solutions of the equation is computed explicitly; it is independent of $f$, in a sense.

Two geometric applications of the stability result are presented. Firstly, I show that the parallel transport along a curve $\gamma$ in a manifold, with respect to some linear connection, is bounded in terms of the length of the projection of $\gamma$ to a manifold of one dimension lower. Secondly, I prove an extendability result for parallel sections in vector bundles, thereby answering a question by Antonio J. Di Scala.
\end{abstract}
\maketitle
\tableofcontents

\section{Introduction}
\label{s:intro}

Let $r$ be a natural number and $A \colon I \to \R^{r\times r}$ a, say continuous, function defined on an interval. Consider the linear evolution equation
\[
x' = A(t)x
\]
in $r$-dimensional space. A solution of the equation, or simply a solution for $A$, or better for $(A,r)$, is per definitionem a differentiable function $\phi \colon I' \to \R^r$ defined on an interval $I' \subset I$ such that, for all $\tau \in I'$, we have
\[
\phi'(\tau) = A(\tau)\phi(\tau).
\]

Recall \cite[112]{MR0352639} that (the equation associated to) $A$, or $(A,r)$, is called \emph{uniformly right stable} when there exists a real number $C > 0$ such that, for all solutions $\phi \colon I' \to \R^r$ and all $s,t \in I'$ with $s \leq t$, we have
\[
\norm{\phi(t)} \le C \norm{\phi(s)}.
\]
Here $\norm\cdot$ denotes the Euclidean norm on $\R^r$.
When $A$ is uniformly right stable, then, in particular, for all $s \in I$, all solutions $\phi$ defined on $I_{\geq s} = \{\tau \in I \mid \tau \ge s\}$ are bounded above in norm. Moreover, for all $t \in I$, all solutions $\psi$ defined on $I_{\le t}$ with $\psi(t) \neq 0$ are bounded away from zero (in norm).

Just as there is a notion of uniform right stability, there is a notion of \emph{uniform left stability}: You require the very last inequality above for all $s,t \in I'$ with $t \le s$ (instead of $s \le t$). The linear evolution equation given by $A$ is called \emph{bistable} when $A$ is both uniformly right stable and uniformly left stable; cf. \cite[113]{MR0352639}. The following rather elementary proposition (for a proof see, e.g., \cite[54]{MR1929104}) yields a first sufficient criterion for bistability.

\begin{prop}
\label{p:elementary_gronwall}
Let $r$ and $A$ be as above. Then for all solutions $\phi \colon I' \to \R^r$ for the equation associated to $A$ and all $s,t \in I'$, the inequality
\[
\norm{\phi(t)} \le e^{\abs*{\int_s^t \normx{A}{op} \,d\lambda}} \norm{\phi(s)}
\]
holds,
where $\normx\cdot{op}$ signifies the natural operator norm for $r\times r$ matrices.
\end{prop}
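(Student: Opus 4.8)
The plan is to bound not $\norm{\phi(\cdot)}$ directly but its square $g(\tau) \coloneqq \norm{\phi(\tau)}^2 = \langle \phi(\tau),\phi(\tau)\rangle$, which has the advantage of being differentiable everywhere on $I'$, with no case distinction needed at the zeros of $\phi$. Since $\phi$ is differentiable and the Euclidean inner product is symmetric and bilinear, $g$ is differentiable with $g'(\tau) = 2\langle \phi'(\tau),\phi(\tau)\rangle = 2\langle A(\tau)\phi(\tau),\phi(\tau)\rangle$. Applying the Cauchy--Schwarz inequality and the definition of the operator norm, I obtain
\[
\abs*{g'(\tau)} \le 2\norm{A(\tau)\phi(\tau)}\,\norm{\phi(\tau)} \le 2\normx{A(\tau)}{op}\norm{\phi(\tau)}^2 = 2\normx{A(\tau)}{op}\,g(\tau)
\]
for every $\tau \in I'$.

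Next I would convert this differential inequality into the asserted integral bound by the standard \emph{integrating factor} trick. Fix $s,t \in I'$; it suffices to treat the case $s \le t$, establishing in it both $\norm{\phi(t)} \le e^{\int_s^t \normx{A}{op}\,d\lambda}\norm{\phi(s)}$ and $\norm{\phi(s)} \le e^{\int_s^t \normx{A}{op}\,d\lambda}\norm{\phi(t)}$ --- the second inequality, applied with the roles of $s$ and $t$ exchanged, covers the case $t \le s$, since then $\abs*{\int_s^t \normx{A}{op}\,d\lambda} = \int_t^s \normx{A}{op}\,d\lambda$. Observe that $\normx{A(\cdot)}{op}$ is continuous, hence integrable on $[s,t]$, and that $\tau \mapsto \int_s^\tau \normx{A}{op}\,d\lambda$ is of class $C^1$ there. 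Put $h_\pm(\tau) \coloneqq g(\tau)\exp\bigl(\mp 2\int_s^\tau \normx{A}{op}\,d\lambda\bigr)$. Then $h_\pm$ is differentiable on $[s,t]$ with
\[
h_\pm'(\tau) = \bigl(g'(\tau) \mp 2\normx{A(\tau)}{op}\,g(\tau)\bigr)\exp\bigl(\mp 2\int_s^\tau \normx{A}{op}\,d\lambda\bigr),
\]
which the estimate from the first paragraph renders $\le 0$ in the $+$ case and $\ge 0$ in the $-$ case. Hence $h_+$ is nonincreasing and $h_-$ is nondecreasing on $[s,t]$, so $h_+(t) \le h_+(s) = g(s)$ and $h_-(t) \ge h_-(s) = g(s)$, that is, $g(t) \le e^{2\int_s^t \normx{A}{op}\,d\lambda}g(s)$ and $g(s) \le e^{2\int_s^t \normx{A}{op}\,d\lambda}g(t)$.

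Taking square roots in both of these inequalities makes the factor $2$ disappear and yields precisely the two estimates displayed above, which together give the claim for arbitrary $s,t \in I'$. I do not anticipate a genuine obstacle: the only mild subtlety, the failure of $\tau \mapsto \norm{\phi(\tau)}$ to be differentiable at zeros of $\phi$, is sidestepped outright by passing to $g = \norm{\phi}^2$. (Alternatively, one could note that a solution of a linear equation with continuous coefficient function is either identically zero or nowhere zero and then run the integrating-factor argument on $\norm{\phi(\cdot)}$ itself, dispensing with the factor $2$; or one could start from the integral equation $\phi(t) = \phi(s) + \int_s^t A(\lambda)\phi(\lambda)\,d\lambda$ and invoke Grönwall's lemma in its integral form.)
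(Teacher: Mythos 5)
Your proof is correct. The paper itself supplies no proof of \cref{p:elementary_gronwall} but merely cites a textbook reference, so there is no in-paper argument to compare against; your integrating-factor computation on $g=\norm{\phi}^2$ is a standard and clean way to obtain the estimate, and it cleanly sidesteps the non-differentiability of $\norm{\phi}$ at zeros of $\phi$. One small remark: the case distinction at the end is actually unnecessary, since your two displayed inequalities $g(t) \le e^{2\int_s^t \normx{A}{op}\,d\lambda} g(s)$ and $g(s) \le e^{2\int_s^t \normx{A}{op}\,d\lambda} g(t)$ for $s \le t$ already deliver, after taking square roots, both the inequality asked for and the inequality with $s$ and $t$ swapped, so the reduction ``it suffices to treat $s\le t$'' is complete as soon as you observe this symmetry --- which you do. The two alternatives you mention (the nowhere-vanishing dichotomy for linear systems, and the integral form of Grönwall applied to $\phi(t)=\phi(s)+\int_s^t A\phi\,d\lambda$) are both viable; the latter is the one most directly in the spirit of the rest of the paper, which works with integral formulations of evolution equations in Banach spaces where the inner-product trick is unavailable, but for the finite-dimensional Euclidean setting of this proposition your argument is perfectly adequate.
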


As a matter of fact, \cref{p:elementary_gronwall} tells us that when the norm of $A$ has finite integral over $I$ (either in the Lebesgue or the possibly improper Riemann sense), then $A$ is bistable. We can indeed take
\[
C = e^{\int_I \normx{A}{op} \,d\lambda}
\]
in the definition. This criterion is, however, by no means a necessary criterion. Consider the example $r=1$, $I = \coint{0,\infty}$, or $I = \R$, and $A$ given by $A(\tau) = \cos\tau$, viewed as a $1\times1$ matrix, for all $\tau \in I$. Then the integral of $\normx{A}{op} = \abs{\cos}$ over $I$ is evidently not finite. Yet, when $\phi \colon I' \to \R^1$ is a solution for $A$, a little elementary calculus proves the existence of an element $v \in \R^1$ such that
\[
\phi(\tau) = e^{\sin\tau}v, \quad \forall\tau \in I'.
\]
In consequence, we have
\[
\norm{\phi(t)} \le e\norm v = e^2e^{-1}\norm v \le e^2 \norm{\phi(s)}
\]
for all $s,t \in I'$; that is, we have bistability for $A$.

The latter example generalizes as follows. Let $r$ and $A$ be again arbitrary. Assume that, for some $t_0 \in I$, the function
\[
\Phi \colon I \to \R^{r\times r}, \quad \Phi(\tau) = \exp\paren*{\int_{t_0}^\tau A \,d\lambda},
\]
where $\exp$ denotes the matrix exponential function for $r\times r$ matrices, solves the equation associated to $A$ in the sense that $\Phi$ is differentiable with
\[
\Phi'(\tau) = A(\tau)\Phi(\tau), \quad \forall \tau \in I.
\]
Then every solution $\phi\colon I' \to \R^r$ for $A$ can be written as $\Phi|_{I'} v$ for some element $v \in \R^r$. In consequence, we have
\begin{align*}
\norm{\phi(t)} = \norm{\Phi(t)v} &\le \normx{\Phi(t)}{op}\norm{v} = \normx{\Phi(t)}{op}\norm{\Phi(s)^{-1}\Phi(s)v} \\ &\le \normx{\Phi(t)}{op}\normx{\Phi(s)^{-1}}{op}\norm{\Phi(s)v} = \normx{\Phi(t)}{op}\normx{\Phi(s)^{-1}}{op}\norm{\phi(s)}
\end{align*}
for all $s,t \in I'$. Furthermore, for all $\tau \in I$, we have
\[
\normx{\Phi(\tau)^{\pm1}}{op} \le e^{\normx*{\pm\int_{t_0}^\tau A \,d\lambda}{op}}.
\]
Thus when
\[
M := \sup\set*{\normx*{\int_{t_0}^\tau A \,d\lambda}{op} \mid \tau \in I} < \infty,
\]
we infer that $A$ is bistable. Indeed we can take $C = e^{2M}$ in the definition.

Observe that for general $r$ and $A$, an element $t_0 \in I$ such that $\Phi$, defined as above, solves the equation associated to $A$ does not exist. The typical examples where such a $t_0$ exists are subsumed under the name of \emph{Lappo-Danilevskii}; cf. \cite[86]{MR1351004}. I refrain from elaborating on this point. Instead I ask whether the criterion we have established for Lappo-Danilevskii systems extends to arbitrary systems.

\begin{prob}
\label{q:criterion}
Let $r$ be a natural number, $I$ an interval, $A \colon I \to \R^{r\times r}$ continuous. Assume that there exists a number $M > 0$ such that
\[
\norm*{\int_s^t A \,d\lambda} \leq M, \quad \forall s,t \in I.
\]
Is it true then that $A$ is bistable?
\end{prob}

For the moment I feel that \cref{q:criterion} is out of reach---at least, if one does not make additional structural assumptions on $A$ (e.g., periodicity, upper triangular form, Lappo-Danilevskii form, or eigenvalue estimates). Luckily, for the purposes of this note, we are in the position to make additional structural assumptions on $A$. Specifically, I investigate the stability of $A$ in case
\[
A(\tau) = f'(\tau)\tilde G(\tau,f(\tau)), \quad \forall\tau \in I,
\]
for a function $\tilde G \colon I\times J \to \R^{r\times r}$ and a, say $C^1$, function $f \colon I \to J$, where $J \subset \R$ is an interval. My stability results are presented and proven in \cref{s:stability}. The main theorem is \cref{t:main}. In view of \cref{q:criterion}, or more generally from the point of view of a stability theorist, the striking feature of \cref{t:main} is that it proves bistability for a wide range of systems $A$ for which all of the customary criteria fail---in particular, the integral $\int_I \normx{A}{op} \,d\lambda$ is not finite, $A$ is not periodic, and $A$ is not Lappo-Danilevskii (see \cref{x:elementary}).

From the point of view of differential geometry, systems $A$ of the described form are interesting as they occur in the study of parallel transports with respect to a linear connection on a vector bundle. In fact, effective stability results correspond to effective bounds for parallel transports. This application is explicated in \cref{s:bounds}. A further geometric application is given in \cref{s:neg}. The latter deals with the possibility to extend parallel sections in a vector bundle which are defined on the complement of the graph of a continuous function.
\medskip

\emph{Acknowledgements}: I simply must thank Antonio J. Di Scala, for he keeps bringing beautiful mathematics to people (like me).

\section{A change of variables formula for the Bochner integral}
\label{s:cov}

For lack of an adequate reference in the literature I state and prove here a change of variables formula (“integration by substitution”) for the Bochner integral.

\begin{theo}
\label{t:cov}
Let $I,J \subset \R$ be intervals, $E$ a Banach space, $y \in L^\infty_{loc}(J;E)$, and $f \in AC_{loc}(I)$ such that $f(I) \subset J$. Then $f' (y \circ f) \in L^1_{loc}(I;E)$ and, for all $s,t \in I$, we have
\begin{equation}
\label{e:cov}
\int_s^t f' (y \circ f) \,d\lambda = \int_{f(s)}^{f(t)} y \,d\lambda.
\end{equation}
\end{theo}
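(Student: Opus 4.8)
\emph{Proof plan.}
Since membership in $L^1_{loc}(I;E)$ is tested on compact subintervals and any two points of $I$ lie in a common compact subinterval of $I$, it suffices to fix a compact interval $K=[\alpha,\beta]\subseteq I$ and prove (i) that $f'(y\circ f)\in L^1(K;E)$ and (ii) that \eqref{e:cov} holds for all $s,t\in K$. Since $f$ is continuous, $[c,d]:=f(K)$ is a compact subinterval of $J$, so $y$ restricts to an element of $L^\infty([c,d];E)\subseteq L^1([c,d];E)$. The whole argument revolves around the Bochner-integral primitive
\[
Y\colon[c,d]\to E,\qquad Y(u)=\int_c^u y\,d\lambda;
\]
the plan is to show that $Y\circ f$ is absolutely continuous on $K$ with $(Y\circ f)'=f'(y\circ f)$ almost everywhere, for then \eqref{e:cov} on $K$ follows from the fundamental theorem of calculus together with the additivity relation $Y(f(t))-Y(f(s))=\int_{f(s)}^{f(t)}y\,d\lambda$.

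Two classical ingredients are needed, each a transcription of its scalar version. The first is the \emph{vector-valued Lebesgue differentiation theorem}: $Y$ is Lipschitz with constant $\operatorname{ess\,sup}_{[c,d]}\norm{y}$, and there is a Lebesgue-null set $N\subseteq[c,d]$ such that $Y$ is differentiable on $[c,d]\setminus N$ with $Y'=y$ there. (Pick a countable set dense in the separable essential range of $y$; almost every $u$ is a Lebesgue point of the scalar function $\norm{y(\cdot)-v}$ for every such $v$, and at those $u$ one reads off $\frac1h\int_u^{u+h}\norm{y-y(u)}\,d\lambda\to 0$.) The second is the fact that, because $f|_K$ is absolutely continuous, $f'(x)=0$ for almost every $x\in f^{-1}(Z)$ whenever $Z\subseteq\R$ is Lebesgue-null; I would deduce this from the one-dimensional area (Banach-indicatrix) identity $\int_K g(f(x))\abs{f'(x)}\,dx=\int_{\R}g(u)\,\#(f^{-1}(u)\cap K)\,du$ for Borel $g\ge 0$, applied to $g=\mathbf{1}_Z$ and using that $\#(f^{-1}(\cdot)\cap K)$ is integrable, hence finite almost everywhere.

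Granting these, the chain rule for $Y\circ f$ is short. For almost every $x\in K$ the derivative $f'(x)$ exists; fix such an $x$. If $f(x)\in N$, then $f'(x)=0$ for almost all such $x$ by the second ingredient, and the Lipschitz bound $\norm{(Y\circ f)(x+h)-(Y\circ f)(x)}\le\operatorname{ess\,sup}_{[c,d]}\norm{y}\cdot\abs{f(x+h)-f(x)}$ forces $(Y\circ f)'(x)=0=f'(x)\,y(f(x))$. If $f(x)\notin N$, then $Y$ is differentiable at $f(x)$ with $Y'(f(x))=y(f(x))$, and Carathéodory's description of pointwise differentiability --- which holds verbatim for $E$-valued maps --- shows $Y\circ f$ differentiable at $x$ with $(Y\circ f)'(x)=f'(x)\,Y'(f(x))=f'(x)\,y(f(x))$. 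Hence $(Y\circ f)'$ exists almost everywhere on $K$ and equals $f'(y\circ f)$; in particular $f'(y\circ f)$ is strongly measurable (it is independent of the representative of $y$ by the second ingredient, and an almost-everywhere pointwise limit of difference quotients of the continuous map $Y\circ f$) and satisfies $\norm{f'(x)\,y(f(x))}\le\operatorname{ess\,sup}_{[c,d]}\norm{y}\cdot\abs{f'(x)}$ with $f'\in L^1(K)$, which proves (i).

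Finally, $Y\circ f$ is absolutely continuous on $K$, since post-composition with the Lipschitz map $Y$ merely rescales the modulus of absolute continuity of $f|_K$. So for each $\varphi\in E^*$ the real function $\varphi\circ Y\circ f$ is absolutely continuous with almost-everywhere derivative $\varphi\circ(Y\circ f)'$, and the scalar fundamental theorem of calculus gives, for $s,t\in K$,
\begin{align*}
\varphi\paren*{\int_s^t f'(y\circ f)\,d\lambda}
&=\int_s^t\varphi\paren*{(Y\circ f)'}\,d\lambda\\
&=(\varphi\circ Y\circ f)(t)-(\varphi\circ Y\circ f)(s)
=\varphi\paren*{\int_{f(s)}^{f(t)}y\,d\lambda},
\end{align*}
whence \eqref{e:cov} by the Hahn--Banach theorem. (Once (i) is established one can also bypass the vector-valued chain rule entirely, applying the classical scalar change-of-variables formula to $\varphi\circ y\in L^\infty_{loc}(J;\R)$ for each $\varphi\in E^*$ and concluding by Hahn--Banach; but the route above is self-contained.) I expect the second ingredient --- the interplay between the possibly wild oscillation of $f$ on small sets and the null exceptional set $N$ of $Y$ --- to be the only genuine obstacle; everything else is routine measure theory or a verbatim scalar computation.
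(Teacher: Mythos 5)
Your proof is correct and follows the same overall strategy as the paper: reduce to a compact subinterval, introduce the Lipschitz primitive $Y(u)=\int_c^u y\,d\lambda$, prove the a.e.\ chain rule $(Y\circ f)' = f'\cdot(y\circ f)$ by splitting on $f'(t)=0$ versus $f'(t)\ne 0$, observe $Y\circ f\in AC$, and conclude by the fundamental theorem of calculus. You substitute two supporting lemmas for the ones the paper uses. First, where the paper invokes the Serrin--Varberg null-set lemma (the paper's \cref{l:serrin-varberg}) to show that $\set{t : f(t)\notin J_0,\ f'(t)\ne 0}$ is null, you instead derive this from the one-dimensional area (Banach-indicatrix) identity for absolutely continuous $f$ applied to $g=\mathbf 1_Z$; this is a valid alternative, a little heavier machinery-wise and slightly less general (Serrin--Varberg requires only that $f$ be differentiable on the exceptional set, not AC), but here $f$ is AC so nothing is lost. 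Second, where the paper cites a vector-valued FTC directly, you scalarize with $\varphi\in E^*$, apply the classical scalar FTC, and finish with Hahn--Banach; this is also correct, and arguably more self-contained. The remaining steps --- the vector-valued Lebesgue differentiation theorem via separability of the essential range, the Lipschitz argument at points where $f'(t)=0$, the measurability of $f'(y\circ f)$ as an a.e.\ limit of difference quotients, and the domination $\norm{f'(y\circ f)}\le \norm{y}_{L^\infty}\abs{f'}$ --- all match the paper's argument in substance.
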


Here go my conventions concerning notation and terminology. An \emph{interval} is a connected, or equivalently a convex, subset of the real number line. The empty set is an interval. A \emph{Banach space} is a real Banach space, just as a \emph{vector space} without further specification is a real vector space. When $F$ is a normed vector space, I denote $\normx\cdot F$ the norm of $F$. When I feel that $F$ can be guessed from the context, I might drop the reference to it, thus writing $\norm\cdot$ instead of $\normx\cdot F$.

Let $I$ be an interval, $E$ a Banach space. Then $L^1(I;E)$ denotes the set of all \emph{Bochner integrable} functions $x \colon I \to E$; see \cite[9]{MR1886588}. Functions that agree almost everywhere on $I$ are not identified (somewhat contrary to etiquette). $L^\infty(I;E)$ denotes the set of all \emph{Bochner measurable} functions $x \colon I \to E$ which are \emph{essentially bounded}---that is, there exists a real number $M$ such that the set
\[
\set{t \in I \mid \normx{x(t)}E > M}
\]
is a (Lebesgue) null set. $C(I;E)$ denotes the set of all \emph{continuous} functions from $I$ to $E$. $AC(I;E)$ denotes the set of all \emph{absolutely continuous} functions $x \colon I \to E$; see \cites[16]{MR1886588}[73]{MR2527916}. I put $C(I) := C(I;\R)$ and $AC(I) := AC(I;\R)$, where $\R$ is thought of as a normed vector space. $L^1_{loc}(I;E)$ denotes the set of all functions $x \colon I \to E$ such that, for all compact intervals $K \subset I$, we have $x|_K \in L^1(K;E)$. Similar definitions apply for $L^\infty_{loc}$ and $AC_{loc}$. When $f \in AC_{loc}(I)$, I denote $f'$ the derivative of $f$, which is defined to be the ordinary derivative of $f$ at points where $f$ is differentiable and $0$ otherwise.

When $x \in L^1(I;E)$, I write $\int_I x \,d\lambda$ or $\int x \,d\lambda_I$ for the \emph{Bochner integral} of $x$ on $I$ in $E$. Note that the reference to $E$ is suppressed in the notation of the integral (actually a bad thing, but I surrender to the customs here). The $\lambda$ shall hint at an integration with respect to the \emph{Lebesgue measure}, or better its trace on $I$. Yet, as I will not integrate with respect to measures other than the Lebesgue measure, I may abstain from the general formalism. When $x \in L^1_{loc}(I;E)$ and $s,t \in I$, I set
\[
\int_s^t x \,d\lambda := \begin{cases}\int x|_K \,d\lambda_K & \text{when } s\le t,\\ -\int x|_K \,d\lambda_K & \text{when } t < s, \end{cases}
\]
where $K$ signifies the set of all points lying between $s$ and $t$---that is,
\[
K := \set{\alpha s + (1-\alpha)t \mid \alpha \in [0,1]},
\]
which is a subset of $I$.

\begin{rema}
\label{r:cov}
\Cref{t:cov} should be seen as preliminary, or exemplary, in the following respect. Assume that $E = \R$ and weaken the conditions $y \in L^\infty_{loc}(J;E)$ and $f \in AC_{loc}(I)$ respectively to $y \in L^1_{loc}(J;E)$ and $f \colon I \to \R$ being a function which is differentiable almost everywhere on $I$. Then the conclusion of the theorem---that is, $f'(y\circ f) \in L^1_{loc}(I;E)$ and, for all $s,t \in I$, we have \cref{e:cov}---holds if and only if $Y \circ f \in AC_{loc}(I;E) = AC_{loc}(I)$ for one, or equivalently all, primitive(s) $Y \colon J \to E$ of $y$. This fact is due to Serrin and Varberg \cite[Theorem 3]{MR0247011}; see also \cite[Theorem 3.54]{MR2527916}. The variant stated in \cref{t:cov} is implied as a special case; cf. \cites[Corollary 7]{MR0247011}[Corollary 3.59]{MR2527916}.

The theorem of Serrin and Varberg extends to finite-dimensional spaces $E$ of course. For arbitrary Banach spaces $E$, however, the argument of Serrin and Varberg breaks down as it uses, among others, the fact that $Y\circ f \in AC_{loc}(I;E)$ implies the almost everywhere differentiability of $Y\circ f$ on $I$. Yet even presupposing that $Y\circ f$ be differentiable almost everywhere on $I$ in the if-part, Serrin's and Varberg's argument cannot be copied naively.\footnote{I suggest that some research in that direction be carried out.} For my proof of \cref{t:cov} I managed to retain an essential portion of Serrin's and Varberg's ideas---namely, \cref{l:serrin-varberg}.
\end{rema}

\begin{lemm}
\label{l:serrin-varberg}
Let $I \subset \R$ be an interval, $f \colon I \to \R$ a function, $A \subset I$ such that $f$ is differentiable at every point of $A$. Assume that $f(A)$ is a null set. Then the set $\set{t \in A \mid f'(t) \neq 0}$ is a null set.
\end{lemm}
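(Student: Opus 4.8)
The plan is to carve the set $S:=\set*{t\in A\mid f'(t)\neq 0}$ into countably many pieces on each of which $f$ is injective with a Lipschitz inverse, and then to invoke the elementary fact that a Lipschitz map raises Lebesgue outer measure by at most the factor equal to its Lipschitz constant. Each piece will then be the image, under a Lipschitz map, of a subset of the null set $f(A)$, hence null; and a countable union of null sets is null. Throughout I would argue with outer measure only, since $f$, and therefore $S$, is not assumed measurable---the facts used being merely that subsets of null sets are null and that outer measure is countably subadditive.

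For positive integers $m$ and $k$ put
\[
S_{m,k}:=\set*{t\in A\mid \abs{f(s)-f(t)}\ge \tfrac{1}{m}\abs{s-t}\ \text{whenever}\ s\in I,\ \abs{s-t}\le \tfrac{1}{k}}.
\]
One checks $S=\bigcup_{m,k}S_{m,k}$: if $t\in A$ and $L:=\abs{f'(t)}>0$, pick $\delta>0$ with $\abs{f(s)-f(t)-f'(t)(s-t)}\le\tfrac{L}{2}\abs{s-t}$ for $\abs{s-t}\le\delta$, so that $\abs{f(s)-f(t)}\ge\tfrac{L}{2}\abs{s-t}$ there and $t\in S_{m,k}$ for any $m\ge 2/L$ and $k\ge 1/\delta$ (conversely, for $t\in S_{m,k}$ the difference quotients of $f$ at $t$ have absolute value $\ge 1/m$, whence $\abs{f'(t)}\ge 1/m>0$). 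Next split each $S_{m,k}$ into the countably many slices $C_{m,k,j}:=S_{m,k}\cap\coint{j/k,(j+1)/k}$, $j\in\Z$. Two points $t,t'$ of one slice satisfy $\abs{t-t'}<1/k$, hence $\abs{f(t)-f(t')}\ge\tfrac{1}{m}\abs{t-t'}$; in particular $f$ is injective on the slice and the inverse map $g\colon f(C_{m,k,j})\to C_{m,k,j}$ is Lipschitz with constant $m$.

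It remains to show that each slice $C:=C_{m,k,j}$ is null. Since $f(C)\subset f(A)$ and $f(A)$ is a null set, $f(C)$ has outer measure $0$: given $\varepsilon>0$, cover $f(C)$ by countably many intervals of total length $<\varepsilon$; as $g$ carries a set of diameter $d$ into one of diameter $\le md$, pushing this cover forward by $g$ exhibits $C=g(f(C))$ as covered by countably many intervals of total length $<m\varepsilon$. Thus $C$ has outer measure $\le m\varepsilon$ for every $\varepsilon>0$, so $C$ is null. Finally $S=\bigcup_{m,k,j}C_{m,k,j}$ is a countable union of null sets and therefore null, which is the assertion.

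I do not expect a genuine obstacle here; the one point demanding a little care is staying on the level of outer measure, because $f$ need not be measurable. The conceptual crux is the combination of two classical observations: a function is locally injective with Lipschitz inverse near any point of nonvanishing derivative, and a Lipschitz map cannot enlarge a null set to one of positive measure.
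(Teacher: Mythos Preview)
Your argument is correct and is precisely the classical Serrin--Varberg proof that the paper defers to via its citations: decompose $S$ into countably many pieces on which $f$ admits a Lipschitz inverse, then use that Lipschitz images of null sets are null. Your care to work with outer measure throughout is appropriate, since no measurability of $f$ is assumed.
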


\begin{proof}
See the original \cite[515]{MR0247011}, or \cite[95]{MR2527916} where the former source is copied.
\end{proof}

\begin{proof}[Proof of \cref{t:cov}]
To begin with, remark that it suffices to treat the case where $I$ and $J$ are compact (the general case then follows readily). So, assume $I$ and $J$ compact. When $I$ is the empty set, the assertion is clear. So, assume that $I$ is inhabited. Then $J$ too is inhabited (since $f$ is a function from $I$ to $J$). Thus there exist unique real numbers $c\le d$ such that $J = [c,d]$. In turn, it makes sense to define
\[
Y \colon J \to E, \quad Y(u) = \int_c^u y \,d\lambda.
\]
Since $y \in L^\infty(J;E)$, we deduce that $Y \colon J \to E$ is Lipschitz continuous. Hence $f \in AC(I)$ implies that $Y \circ f \in AC(I;E)$.\footnote{I omit the details yielding this and the previous assertion.}

Denote $J_0$ the set of all elements $u$ of $J$ such that $Y \colon J \to E$ is differentiable in $u$ and  $Y'(u) = y(u)$. Denote $I_0$ the set of all $t \in I$ such that $f$ is differentiable at $t$ and we have $f(t) \in J_0$ if $f'(t) \neq 0$. Define $A$ to be the set of all $t \in I$ such that $f$ is differentiable at $t$ and $f(t) \notin J_0$. Then $f(A) \subset J \setminus J_0$. Thus $f(A)$ is a null set since $J \setminus J_0$ is a null set; see \cite[Proposition 1.2.2 a)]{MR1886588}. Therefore $\{t \in A \mid f'(t) \neq 0\}$ is a null set by means of \cref{l:serrin-varberg}. Since $f \in AC(I)$, the set of points of $I$ at which $f$ is not differentiable is a null set \cite[Proposition 3.8]{MR2527916}. Taking into account that
\[
I \setminus I_0 = \{t \in I \mid f \text{ not differentiable at } t\} \cup \{t \in A \mid f'(t) \neq 0\},
\]
we infer that $I \setminus I_0$ is a null set.

Now let $t \in I_0$. When $f'(t) = 0$, then, using the Lipschitz continuity of $Y$, one deduces that $Y\circ f$ is differentiable at $t$ with $(Y\circ f)'(t) = 0$.\footnote{I omit the details.} In particular,
\[
(Y \circ f)'(t) = f'(t)y(f(t)).
\]
When $f'(t) \neq 0$ on the other hand, we know that $f(t) \in J_0$, whence the differentiability of $Y\circ f$ at $t$ as well as the validity of latter formula follow from the traditional chain rule \cite[337]{MR1216137}.

The previous arguments show in particular that the set of elements of $I$ at which $Y\circ f$ is not differentiable is a null set (as it is contained in $I \setminus I_0$). Define $(Y\circ f)' \colon I \to E$ to be the function which is given by the derivative of $Y\circ f$ at points of differentiability of $Y\circ f$ and by $0 \in E$ otherwise.
Then according to \cite[Proposition 1.2.3]{MR1886588} we have $(Y \circ f)' \in L^1(I;E)$ and, for all $s,t \in I$,
\[
\int_s^t (Y\circ f)' \,d\lambda = (Y\circ f)(t) - (Y\circ f)(s).
\]
Furthermore, as the set where the functions $(Y\circ f)'$ and $f'(y\circ f)$ differ is a null set, we infer $f'(y\circ f) \in L^1(I;E)$ as well as
\[
\int_s^t f'(y\circ f) \,d\lambda = \int_s^t (Y\circ f)' \,d\lambda
\]
for all $s,t\in I$. Observing that, for all $s,t \in I$, we have
\[
(Y\circ f)(t) - (Y\circ f)(s) = Y(f(t)) - Y(f(s)) = \int_{f(s)}^{f(t)} y \,d\lambda,
\]
the intended \cref{e:cov} follows.
\end{proof}

We apply the change of variables formula to the context of evolution equations. The missing terminology is explained in \cref{d:sol}.
When $E$ is a Banach space, we write $\End(E)$ for the Banach space of continuous linear maps from $E$ to itself; cf. \cite[5--7]{MR1666820}. 

\begin{coro}
\label{c:subst}
Let $I$, $J$, $E$, and $f$ be as in \cref{t:cov}, $B \in L^\infty_{loc}(J;\End(E))$, and $A = f'(B \circ f)$. Then:
\begin{enumerate}
\item \label{i:subst-l1} $A \in L^1_{loc}(I;\End(E))$.
\item \label{i:subst-sol} When $y$ (resp. $Y$) is a vector (resp. operator) solution for $B$ in $E$, then the restriction of $x := y \circ f$ (resp. $X := Y \circ f$) to any subinterval of its domain of definition is a vector (resp. operator) solution for $A$ in $E$.
\item \label{i:subst-evolop} When $Y$ is an evolution operator for $B$ in $E$, then the composition $Y \circ (f\times f)$ is an evolution operator for $A$ in $E$. Here $f\times f$ signifies the function given by the assignment $(t,s) \mapsto (f(t),f(s))$.
\end{enumerate}
\end{coro}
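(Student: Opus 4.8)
The plan is to reduce all three assertions to \cref{t:cov}, applied with the Banach space there taken to be $\End(E)$ (or $E$), after a one-line algebraic rewriting of the integrand. Assertion \cref{i:subst-l1} is immediate: $\End(E)$ is a Banach space, $B \in L^\infty_{loc}(J;\End(E))$, and $f \in AC_{loc}(I)$ with $f(I)\subset J$, so \cref{t:cov} --- with $B$ in the role of the integrand there --- yields $A = f'(B\circ f) \in L^1_{loc}(I;\End(E))$.

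For \cref{i:subst-sol}, let $y\colon J' \to E$ be a vector solution for $B$, so $J'\subset J$ is an interval, $y$ is continuous, and $y(v)-y(u) = \int_u^v By\,d\lambda$ for all $u,v\in J'$, where $By$ denotes the map $u \mapsto B(u)y(u)$. This map lies in $L^\infty_{loc}(J';E)$ because $B\in L^\infty_{loc}$, $y$ is continuous, and operator evaluation $\End(E)\times E \to E$ is continuous and bilinear, with $\norm{B(u)y(u)} \le \normx{B(u)}{op}\norm{y(u)}$. Fix a subinterval $I'$ of $\set{t\in I \mid f(t)\in J'}$ and put $x = (y\circ f)|_{I'}$. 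The pointwise identity $A(t)x(t) = f'(t)\,B(f(t))\,y(f(t)) = f'(t)\,(By)(f(t))$ shows $Ax = f'\bigl((By)\circ f\bigr)$ on $I'$, and since $f|_{I'}\in AC_{loc}(I')$ and $f(I')\subset J'$, \cref{t:cov} --- now with $By$ as the integrand --- gives
\[
\int_s^t Ax\,d\lambda \;=\; \int_{f(s)}^{f(t)} By\,d\lambda \;=\; y(f(t))-y(f(s)) \;=\; x(t)-x(s)
\]
for all $s,t\in I'$; hence $x$ is a vector solution for $A$ (it lies automatically in $AC_{loc}(I';E)$, being an indefinite integral of the $L^1_{loc}$ function $Ax$). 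The operator case is verbatim the same, with $y$ replaced by an operator solution $Y$ and $By$ by the pointwise composition $u\mapsto B(u)\circ Y(u)$, which lies in $L^\infty_{loc}(J';\End(E))$ by submultiplicativity of the operator norm.

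Assertion \cref{i:subst-evolop} then follows formally from \cref{i:subst-sol}: for each fixed second argument $s$ the partial map $t\mapsto Y(t,s)$ is an operator solution for $B$ with $Y(s,s)=\id$, so composing with $f$ and invoking \cref{i:subst-sol} shows that, on each relevant subinterval, $t\mapsto Y(f(t),f(s))$ is an operator solution for $A$ equal to $\id$ at $t=s$, while the cocycle identity $Y(f(t),f(r))\,Y(f(r),f(s)) = Y(f(t),f(s))$ carries over directly from the one for $Y$. Hence $Y\circ(f\times f)$ is an evolution operator for $A$.

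I expect the only point needing genuine care to be the bookkeeping of domains: because $f$ need not be monotone, the set $\set{t\in I \mid f(t)\in J'}$ need not be an interval --- which is precisely why the statement restricts $x$ and $X$ to subintervals --- and one must check that passing to such a subinterval $I'$ preserves the hypotheses of \cref{t:cov} (namely $f(I')\subset J'$ and $B|_{J'}\in L^\infty_{loc}$). Once $I'$ is fixed, every step above is literal, so there is no analytic obstacle beyond \cref{t:cov} itself.
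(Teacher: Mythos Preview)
Your proposal is correct and follows essentially the same route as the paper: \cref{i:subst-l1} is \cref{t:cov} with $\End(E)$ in place of $E$, \cref{i:subst-sol} is \cref{t:cov} applied on a subinterval $I'$ to the integrand $By$ (resp.\ $BY$), and \cref{i:subst-evolop} is read off from \cref{i:subst-sol}. Two minor remarks: in \cref{i:subst-evolop} the cocycle identity is not part of \cref{d:sol}\,\cref{i:sol-evolop} and need not be checked, and the phrase ``on each relevant subinterval'' can simply be ``on $I$'', since for an evolution operator $Y$ the partial map $Y(\_,f(s))$ is defined on all of $J$, so $f^{-1}(J)=I$ is itself the subinterval in question.
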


\begin{proof}
\Cref{i:subst-l1} is immediate from \cref{t:cov} (applied to $\End(E)$ in place of $E$ and $B$ in place of $y$).

As to \cref{i:subst-sol}, let $y$ be a vector solution for $B$ with domain of definition equal to $J'$. Then $x$ is defined on $f^{-1}(J')$. Let $I'$ be a subinterval of $f^{-1}(J')$---that is, an interval such that $f(I') \subset J'$. Let $s,t \in I'$. Then again by \cref{t:cov} (this time applied to $I'$, $J'$, $f|_{I'}$, and $By|_{J'}$ instead of $I$, $J$, $f$, and $y$, respectively), we have
\begin{align*}
x(t) - x(s) & = y(f(t)) - y(f(s)) \\
& = \int_{f(s)}^{f(t)} By \,d\lambda
= \int_s^t f'((By) \circ f) \,d\lambda
= \int_s^t Ax \,d\lambda.
\end{align*}
Therefore $x|_{I'}$ is a vector solution for $A$. For operator solutions instead of vector solutions one argues analogously. \Cref{i:subst-evolop} is a direct consequence of the operator solution part of \cref{i:subst-sol}.
\end{proof}

\section{Stability theorems}
\label{s:stability}

Let me recall the fundamental theorem on the existence and the uniqueness of solutions of linear evolution equations in Banach spaces.

\begin{defi}
\label{d:sol}
Let $I \subset \R$ be an interval, $E$ a Banach space, $A \in L^1_{loc}(I;\End(E))$.
\begin{enumerate}
\item\label{i:sol-vector} A \emph{(vector) solution} for $A$ in $E$ is a continuous function $x \colon I' \to E$ defined on an interval $I' \subset I$ such that
\[
x(t) - x(s) = \int_s^t Ax \,d\lambda
\]
holds for all $s,t \in I'$, where $Ax \colon I' \to E$ is given by $(Ax)(\tau) = A(\tau)(x(\tau))$ for all $\tau \in I'$.
\item\label{i:sol-op} An \emph{operator solution} for $A$ in $E$ is a continuous function $X \colon I' \to \End(E)$ defined on an interval $I' \subset I$ such that
\[
X(t) - X(s) = \int_s^t AX \,d\lambda
\]
holds for all $s,t \in I'$, where $AX \colon I' \to \End(E)$ is given by $(AX)(\tau) = A(\tau) \circ X(\tau)$ for $\tau \in I'$.
\item\label{i:sol-evolop}
An \emph{evolution operator} for $A$ in $E$ is a function
\[
X \colon I \times I \to \End(E)
\]
such that, for all $s \in I$,
\begin{enumerate}
\item the function $X(\_,s)$ is an operator solution for $A$ in $E$, and
\item $X(s,s) = \mathrm{id}_E$.
\end{enumerate}
\end{enumerate}
\end{defi}

\begin{rema}
In \cref{i:sol-vector} of \cref{d:sol} the assumption that $Ax$ be an element of $L^1_{loc}(I';E)$ is implicit, just as in \cref{i:sol-op} the assumption that $AX$ be an element of $L^1_{loc}(I';\End(E))$ is implicit. Otherwise the integrals would not even make sense. 

These conditions are, however, automatic assuming the continuity of $x$ (resp. $X$). The proof sketch is this. Let $K \subset I'$ be a compact interval. Then as $x|_K \colon K \to E$ is continuous, it is Bochner measurable \cite[Corollary 1.1.2 c)]{MR1886588}. Moreover, $x|_K$ is bounded. Thus according to  \cite[VI, Corollary 5.12]{MR1216137}, $(Ax)|_K$ is an element of $L^1(K;E)$ as $A|_K \in L^1(K;\End(E))$ and the pairing
\[
\End(E) \times E \to E, \quad (B,y) \mapsto B(y)
\]
is bilinear and continuous. For $X$ in place of $x$ you use the pairing given by the composition of operators.
\end{rema}

\begin{theo}
\label{t:evolop}
Let $I$, $E$, and $A$ be as in \cref{d:sol}.
\begin{enumerate}
\item \label{i:evolop-ex} There exists a unique evolution operator for $A$ in $E$.
\item \label{i:evolop-trans} When $X$ is an evolution operator for $A$ in $E$, then, for all $s,t \in I$, the operator $X(t,s)$ is invertible and satisfies
\[
X(t,s)^{-1} = X(s,t).
\]
Moreover, for all $s,t,u \in I$, we have
\[
X(u,t)X(t,s) = X(u,s).
\]
\end{enumerate}
\end{theo}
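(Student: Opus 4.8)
The plan is to establish existence and uniqueness of the evolution operator via a Picard iteration adapted to the Bochner-integral setting, and then to deduce the transition-operator identities by an elementary uniqueness argument. For the existence in part \eqref{i:evolop-ex}, I would fix $s \in I$ and seek an operator solution $X(\cdot,s)$ of the integral equation $X(t,s) = \id_E + \int_s^t A(\lambda)X(\lambda,s)\,d\lambda$ with $X(s,s) = \id_E$. Since $A \in L^1_{loc}(I;\End(E))$, on any compact interval $K \subset I$ containing $s$ the function $\lambda \mapsto \normx{A(\lambda)}{op}$ is in $L^1(K)$, so the integral operator $T$ defined by $(TX)(t) = \id_E + \int_s^t AX\,d\lambda$ maps $C(K;\End(E))$ into itself and is a contraction with respect to the weighted sup-norm $\normx{X}{w} = \sup_{t \in K} e^{-2\int_s^t \normx{A}{op}\,d\lambda}\normx{X(t)}{op}$ (or one simply iterates $T$ and estimates the $n$-th iterate by $\frac{1}{n!}\left(\int_K \normx{A}{op}\,d\lambda\right)^n$, which is summable). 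By the Banach fixed point theorem — or by the uniform convergence of the Picard series — one obtains a unique operator solution on $K$; exhausting $I$ by compact intervals and invoking uniqueness on overlaps yields an operator solution $X(\cdot,s)$ on all of $I$. Setting $X(t,s)$ to be its value at $t$ gives the evolution operator; joint measurability/continuity considerations in $(t,s)$ are not needed for the stated conclusion, since \cref{d:sol}\eqref{i:sol-evolop} only demands that each slice $X(\cdot,s)$ be an operator solution with $X(s,s) = \id_E$.

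For uniqueness in \eqref{i:evolop-ex}, suppose $X$ and $\tilde X$ are both evolution operators. Fix $s$; then $D := X(\cdot,s) - \tilde X(\cdot,s)$ is continuous, vanishes at $s$, and satisfies $D(t) = \int_s^t A D\,d\lambda$, so $\normx{D(t)}{op} \le \abs{\int_s^t \normx{A}{op}\normx{D}{op}\,d\lambda}$; Grönwall's inequality (in the integral form valid for $L^1$ coefficients) forces $D \equiv 0$ on $I$. Hence $X = \tilde X$.

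For part \eqref{i:evolop-trans}, the cocycle identity $X(u,t)X(t,s) = X(u,s)$ is the heart of the matter, and it follows again from uniqueness: fix $s,t \in I$ and consider the two functions $u \mapsto X(u,t)X(t,s)$ and $u \mapsto X(u,s)$. Both are operator solutions for $A$ (the first because right-multiplication by the fixed operator $X(t,s)$ commutes with the integral and with applying $A(\lambda)$ on the left, so $u \mapsto X(u,t)X(t,s)$ satisfies the same integral equation), and both take the value $X(t,s)$ at $u = t$. By the uniqueness statement just proved (applied with initial time $t$ and initial value $X(t,s)$ — which one reduces to the $\id_E$ case by the same fixed-point argument, or by linearity), the two agree for all $u$. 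Specializing $u = s$ gives $X(s,t)X(t,s) = X(s,s) = \id_E$, and symmetrically $X(t,s)X(s,t) = \id_E$, so $X(t,s)$ is invertible with inverse $X(s,t)$; the general cocycle identity is then exactly what was displayed.

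The main obstacle I anticipate is purely technical rather than conceptual: making the Picard iteration rigorous in the Bochner setting — specifically, checking that $t \mapsto \int_s^t A(\lambda)X(\lambda)\,d\lambda$ is continuous (indeed, absolutely continuous) when $X$ is continuous and $A \in L^1_{loc}$, and that it is again an operator solution, i.e. that one may differentiate the fixed-point equation back under the integral. These rest on the absolute continuity of indefinite Bochner integrals and on the bilinearity/continuity of operator composition, both already invoked in the remark following \cref{d:sol}; so the estimates go through unchanged from the scalar or finite-dimensional case, and there is no genuinely new difficulty.
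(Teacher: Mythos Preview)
Your proposal is correct and outlines exactly the standard argument (Picard iteration on compact subintervals for existence, Grönwall for uniqueness, and uniqueness again for the cocycle identity and invertibility). The paper itself does not give an independent proof of this theorem but simply cites \cite[96--101]{MR0352639}; the argument found there is essentially the one you sketch, so there is nothing to contrast.
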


\begin{proof}
See \cite[96--101]{MR0352639}.
\end{proof}

Our main tool for bounding the solutions of evolution equations is the following. 

\begin{lemm}
\label{l:compare}
Let $I \subset \R$ be an interval, $E$ a Banach space, $A_k \in L^1_{loc}(I;\End(E))$, $X_k$ an evolution operator for $A_k$ in $E$ ($k=1,2$), $N>0$, $\nu_1 \in \R$, and $\epsilon \in \{\pm1\}$ such that, for all $s,t \in I$ with $s \leq t$, we have
\[
\norm{X_1(t,s)^\epsilon} \leq Ne^{-\nu_1(t-s)}.
\]
Then, for all $s,t \in I$ with $s \leq t$, the following estimates hold:
\begin{align*}
\norm{X_2(t,s)^\epsilon} & \leq N e^{-\nu_1(t-s)} e^{N\int_s^t\norm{A_2 - A_1}\,d\lambda}, \\
\norm{X_2(t,s)^\epsilon - X_1(t,s)^\epsilon} & \leq N e^{-\nu_1(t-s)} \paren*{e^{N\int_s^t \norm{A_2 - A_1} \,d\lambda} - 1}.
\end{align*}
\end{lemm}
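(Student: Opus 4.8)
The plan is to use the variation-of-constants (Duhamel) formula relating the two evolution operators, and then apply a Grönwall-type argument. First I would recall that if $X_1$ and $X_2$ are evolution operators for $A_1$ and $A_2$ respectively, then for fixed $s$ the function $u \mapsto X_1(t,u)X_2(u,s)$ has a weak derivative, and integrating it from $s$ to $t$ yields the identity
\[
X_2(t,s) = X_1(t,s) + \int_s^t X_1(t,u)\bigl(A_2(u) - A_1(u)\bigr)X_2(u,s) \,d\lambda(u).
\]
(One justifies this at the level of Bochner integrals: the integrand is in $L^1_{loc}$ because $A_2 - A_1 \in L^1_{loc}$ and the evolution operators are continuous, hence locally bounded; the identity itself follows from the defining integral equations of $X_1(\_,s)$ and $X_2(\_,s)$ together with the cocycle and inversion properties in \cref{t:evolop}\ref{i:evolop-trans}, via Fubini.) This handles the case $\epsilon = +1$.

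Next I would take norms. Using the hypothesis $\norm{X_1(t,u)} \le N e^{-\nu_1(t-u)}$ for $s \le u \le t$, and setting $g(u) := e^{\nu_1(u-s)}\norm{X_2(u,s)}$, the integral identity gives
\[
e^{\nu_1(t-s)}\norm{X_2(t,s)} \le N + N\int_s^t \norm{A_2(u) - A_1(u)}\, g(u)\,d\lambda(u),
\]
i.e. $g(t) \le N + N\int_s^t \norm{A_2 - A_1}\,g\,d\lambda$. The integral form of Grönwall's inequality then yields $g(t) \le N \exp\bigl(N\int_s^t \norm{A_2 - A_1}\,d\lambda\bigr)$, which is the first claimed estimate after multiplying back by $e^{-\nu_1(t-s)}$. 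For the second estimate I would feed this bound back into the integral identity: since $X_2(t,s) - X_1(t,s) = \int_s^t X_1(t,u)(A_2-A_1)(u)X_2(u,s)\,d\lambda$, taking norms and inserting $\norm{X_1(t,u)} \le N e^{-\nu_1(t-u)}$ and the just-derived bound on $\norm{X_2(u,s)}$ gives
\[
\norm{X_2(t,s) - X_1(t,s)} \le N e^{-\nu_1(t-s)} \int_s^t N\norm{(A_2-A_1)(u)} \, e^{N\int_s^u \norm{A_2-A_1}\,d\lambda}\,d\lambda(u),
\]
and the inner integral is exactly $e^{N\int_s^t\norm{A_2-A_1}\,d\lambda} - 1$ by the fundamental theorem of calculus applied to $u \mapsto e^{N\int_s^u\norm{A_2-A_1}\,d\lambda}$ (which is absolutely continuous, its integrand being $L^1$).

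For the case $\epsilon = -1$, I would observe that $X_k(t,s)^{-1} = X_k(s,t)$, and that $X_k(s,t) = \tilde X_k(-t,-s)$ where $\tilde X_k$ is the evolution operator of $\tilde A_k(\tau) := -A_k(-\tau)$ on the reflected interval $-I$; the hypothesis on $\norm{X_1(t,s)^{-1}}$ translates into the $\epsilon = +1$ hypothesis for $\tilde X_1$, $\tilde A_1$, so the already-proven case applies and transports back, noting $\int$ of $\norm{\tilde A_2 - \tilde A_1}$ over the reflected interval equals $\int_s^t\norm{A_2 - A_1}\,d\lambda$. (Alternatively, one derives a Duhamel identity for the inverses directly and repeats the Grönwall argument verbatim.)

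The main obstacle I anticipate is not the Grönwall estimate itself — that is routine — but the careful justification of the variation-of-constants identity in the Bochner-integral setting: one must check that $u \mapsto X_1(t,u)(A_2-A_1)(u)X_2(u,s)$ is genuinely in $L^1_{loc}$, that the "product rule" for $X_1(t,u)X_2(u,s)$ holds almost everywhere and integrates correctly (this is where the continuity/local boundedness of the evolution operators and the defining integral equations from \cref{d:sol} do the work, together with \cref{t:evolop}), and that no differentiability of $X_k$ is silently assumed. Once that identity is in hand, everything else is a matter of bookkeeping with the exponential weight $e^{-\nu_1(\cdot)}$.
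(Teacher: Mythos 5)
Your argument is correct. The paper itself offers no proof but simply cites Daleckii--Krein (the source \cite{MR0352639}, III, Lemma 2.3), and the variation-of-constants identity combined with a Gr\"onwall estimate (with the time-reversal reduction for $\epsilon = -1$) that you lay out is precisely the standard argument behind that reference, including the technical care needed to justify the Duhamel identity for locally integrable coefficients in the Bochner setting.
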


\begin{proof}
See \cite[III, Lemma 2.3]{MR0352639}.
%
%
\end{proof}

\begin{coro}
\label{c:standard_estimate}
Let $I$, $E$, $A$ be as in \cref{d:sol}, $X$ an evolution operator for $A$ in $E$. Then, for all $s,t \in I$ with $s \leq t$, we have
\[
\norm{X(t,s)^{\pm1}} \leq e^{\int_s^t \norm{A} \,d\lambda}.
\]
\end{coro}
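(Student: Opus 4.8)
The plan is to deduce \cref{c:standard_estimate} from \cref{l:compare} by a trivial specialization. Concretely, I would apply \cref{l:compare} with the two systems chosen as $A_1 := 0$ (the zero operator-valued function, which lies in $L^1_{loc}(I;\End(E))$) and $A_2 := A$. The associated evolution operators are $X_1(t,s) = \id_E$ for all $s,t \in I$ and $X_2 = X$, the given evolution operator for $A$.

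First I would check the hypothesis of \cref{l:compare} for this choice. Since $\norm{\id_E} = 1$ (for $E \neq \{0\}$; the case $E = \{0\}$ being vacuous), we have $\norm{X_1(t,s)^\epsilon} = 1 \le N e^{-\nu_1(t-s)}$ for all $s \le t$ provided we take $N := 1$ and $\nu_1 := 0$. This works simultaneously for both signs $\epsilon = +1$ and $\epsilon = -1$, since $X_1(t,s)^{-1} = X_1(s,t) = \id_E$ as well by \cref{i:evolop-trans} of \cref{t:evolop} (or simply by inspection).

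Then the first estimate in the conclusion of \cref{l:compare}, with $N = 1$, $\nu_1 = 0$, reads
\[
\norm{X(t,s)^\epsilon} \le e^{\int_s^t \norm{A - 0} \,d\lambda} = e^{\int_s^t \norm{A} \,d\lambda}
\]
for all $s,t \in I$ with $s \le t$, and applying this for $\epsilon = +1$ and $\epsilon = -1$ yields precisely the claimed bound on $\norm{X(t,s)^{\pm1}}$.

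I do not anticipate any real obstacle here; the only points requiring a word are that $0 \in L^1_{loc}(I;\End(E))$ so that \cref{l:compare} is applicable, that $\id_E$ (equivalently, the constant function $(t,s) \mapsto \id_E$) is indeed the evolution operator for the zero system in the sense of \cref{d:sol} (its defining integral equation $X(t,s) - X(s,s) = \int_s^t 0 \,d\lambda = 0$ holds trivially, and $X(s,s) = \id_E$), and that we may assume $E \neq \{0\}$ without loss of generality since otherwise $\End(E) = \{0\}$ and both sides of the asserted inequality vanish. With these remarks in place the corollary is immediate.
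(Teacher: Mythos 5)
Your proposal is exactly the paper's proof: it specializes \cref{l:compare} with $A_1 = 0$, $A_2 = A$, $X_1 \equiv \id_E$, $X_2 = X$, $N = 1$, $\nu_1 = 0$. The additional sanity checks you include (the zero system, the degenerate $E = \{0\}$ case, both signs of $\epsilon$) are correct and harmless.
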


\begin{proof}
Take $A_1$ on $I$ to be constantly equal to the zero operator on $E$, $A_2 = A$, $X_1$ on $I\times I$ constantly equal to $\id_E$, $X_2 = X$, $N = 1$, and $\nu_1 = 0$ in \cref{l:compare}.
\end{proof}

Let $I \subset \R$ be an arbitrary subset, $n \in \N$. Then we define
\[
\mathcal S_n(I) := \{a = (a_0,\dots,a_n) \in I^{n+1} \mid (\forall i<n)\, a_i < a_{i+1}\}
\]
to be the set of all length-$(n+1)$, strictly increasing sequences of elements of $I$.
When $J$ is an interval and $F$ a Banach space, we make the set $L^1(J;F)$ (see \cref{s:cov}) into a normed vector space the usual way so that
\[
\normx{y}{L^1(J;F)} = \int_J \normx{y}F \,d\lambda
\]
for all $y \in L^1(J;F)$.

\begin{lemm}
\label{l:stability}
Let $J \subset \R$ be an interval, $n \in \N$, $a \in \mathcal S_n(\R)$, $E$ a Banach space, $F = \End(E)$, and $G = (G_0,\dots,G_{n-1})$ an $n$-tuple of elements of $L^\infty_{loc}(J;F) \cap L^1(J;F)$. Set
\[
N := e^{\max_{i < n} \normx{G_i}{L^1(J;F)}},
\]
where the maximum is taken to be $0$ in case $n = 0$,
\[
V := \sum_{0 < j < n} \normx{G_j - G_{j-1}}{L^1(J;F)},
\]
and $I := [a_0,a_n]$.
Then, for all $f \in AC(I)$ such that $f(I) \subset J$ and all functions $A \colon I \to F$ such that
\[
A(t) = f'(t)G_i(f(t))
\]
whenever $i<n$ and $t \in \coint{a_i,a_{i+1}}$,\footnote{Note that the value of the function $A$ at $a_n$ is arbitrary.} we have
\begin{enumerate}
\item \label{i:stability-l1} $A \in L^1(I;F)$, and
\item \label{i:stability-Xbound} when $X$ is an evolution operator for $A$ in $E$, then
\[
\norm{X(t,s)^{\pm1}} \leq N^2 e^{N^{3 + 2N} V}
\]
for all $s,t \in I$ with $s \leq t$.
\end{enumerate}
\end{lemm}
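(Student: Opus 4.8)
The plan is to prove \cref{i:stability-Xbound} by expressing $X(t,s)$, through the cocycle relation, as a product of at most $n$ values of the evolution operators $Y_i$ of the individual $G_i$---each such value being, by the change-of-variables corollary, a value of $Y_i\circ(f\times f)$---and then bounding that product \emph{uniformly in $n$} by a telescoping induction resting on the difference estimate in \cref{l:compare}. I would settle \cref{i:stability-l1} first, piecewise: for $i<n$ put $K_i:=[a_i,a_{i+1}]$; on $K_i$ the function $A$ agrees, off the single point $a_{i+1}$, with $f'(G_i\circ f)$, and since $f|_{K_i}\in AC(K_i)$, $f(K_i)\subset J$ and $G_i\in L^\infty_{loc}(J;F)$, \cref{t:cov} (equivalently, the first assertion of \cref{c:subst}) gives $A|_{K_i}\in L^1(K_i;F)$; as $I=\bigcup_{i<n}K_i$ with the $K_i$ meeting only in the $a_i$, summation yields $A\in L^1(I;F)$. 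The case $n=0$, where $I$ is a point, is trivial.

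For \cref{i:stability-Xbound}, let $Y_i$ be the evolution operator for $G_i$ on $J$ (it exists by \cref{t:evolop}, as $G_i\in L^1_{loc}(J;F)$). By \cref{c:standard_estimate} and $\int_s^t\norm{G_i}\,d\lambda\le\normx{G_i}{L^1(J;F)}\le\log N$ (for $s\le t$ in $J$), we get $\norm{Y_i(v,u)^{\pm1}}\le N$ for all $u,v\in J$ (the case $u>v$ reducing to $u\le v$ via $Y_i(v,u)=Y_i(u,v)^{-1}$). By the evolution-operator part of \cref{c:subst}, applied on $K_i$ to $f|_{K_i}$ and $G_i$, the map $Y_i\circ(f\times f)$ is an evolution operator for $f'(G_i\circ f)|_{K_i}$, hence also for $A|_{K_i}$ since these coefficients agree almost everywhere; comparing it with the restriction of the given $X$ and invoking the uniqueness in \cref{t:evolop} yields $X(t,s)=Y_i(f(t),f(s))$ for all $s,t\in K_i$. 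Now fix $s<t$ in $I$, pick $p$ maximal with $a_p\le s$ and $q$ minimal with $t\le a_q$ (so $0\le p<q\le n$), and split $[s,t]$ at the breakpoints $a_{p+1},\dots,a_{q-1}$. The cocycle relation of \cref{t:evolop} then presents $X(t,s)$ as
\[
X(t,s)=Y_{p+m-1}(u_m,u_{m-1})\,Y_{p+m-2}(u_{m-1},u_{m-2})\cdots Y_p(u_1,u_0),
\]
with $m:=q-p\le n$, consecutive indices $p,\dots,p+m-1$ in $\set{0,\dots,n-1}$, and $u_0=f(s)$, $u_m=f(t)$, $u_l=f(a_{p+l})$ for $0<l<m$, all lying in $f(I)\subset J$.

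The heart of the argument is the estimate, to be proved by induction on $m\ge1$: for any consecutive indices $p,\dots,p+m-1$ in $\set{0,\dots,n-1}$ and any $u_0,\dots,u_m\in J$,
\[
\norm*{Y_{p+m-1}(u_m,u_{m-1})\cdots Y_p(u_1,u_0)}\le N\prod_{l=2}^{m}\paren*{1+N\bigl(e^{N\normx{G_{p+l-1}-G_{p+l-2}}{L^1(J;F)}}-1\bigr)}.
\]
The base case $m=1$ is $\norm{Y_p(u_1,u_0)}\le N$. For $m>1$, write $Y_p(u_1,u_0)=Y_{p+1}(u_1,u_0)-E$ with $E:=Y_{p+1}(u_1,u_0)-Y_p(u_1,u_0)$ and use $Y_{p+1}(u_2,u_1)Y_{p+1}(u_1,u_0)=Y_{p+1}(u_2,u_0)$ to fuse the two rightmost factors; the product becomes
\[
Y_{p+m-1}(u_m,u_{m-1})\cdots Y_{p+1}(u_2,u_0)-\bigl(Y_{p+m-1}(u_m,u_{m-1})\cdots Y_{p+1}(u_2,u_1)\bigr)E,
\]
a difference of two $(m-1)$-factor products with the consecutive indices $p+1,\dots,p+m-1$ (to which the induction hypothesis applies) and an error term. \Cref{l:compare} (second estimate, with $A_1=G_p$, $A_2=G_{p+1}$, $\nu_1=0$ and $\epsilon=\pm1$ chosen according to the order of $u_0,u_1$) gives $\norm{E}\le N\bigl(e^{N\normx{G_{p+1}-G_p}{L^1(J;F)}}-1\bigr)$, and multiplying out closes the induction. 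Applying this to the product for $X(t,s)$, noting $\sum_{l=2}^m\normx{G_{p+l-1}-G_{p+l-2}}{L^1(J;F)}\le V$ (since $1\le p+1$ and $p+m-1\le n-1$) and using $1+N(e^{Nv}-1)\le e^{N^2v}$ for $v\ge0$ and $N\ge1$, we obtain $\norm{X(t,s)}\le N e^{N^2V}\le N^2 e^{N^{3+2N}V}$, the last step because $N\ge1$ forces $N\le N^2\le N^{3+2N}$. The bound for $X(t,s)^{-1}=X(s,t)$ follows by the mirror-image induction applied to the product $Y_p(u_0,u_1)Y_{p+1}(u_1,u_2)\cdots Y_{p+m-1}(u_{m-1},u_m)$ that represents $X(s,t)$; the cases $s=t$ and $n=0$ are trivial.

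The hard part will be the product estimate itself: bounding the $m\le n$ factors one at a time yields only $N^m$, so uniformity in $n$ forces genuine use of the $L^1$-closeness of consecutive $G_i$, which is exactly what the difference estimate of \cref{l:compare} and the cocycle-driven fusion of adjacent factors supply. It is worth recording why the tempting shortcut fails: comparing $A$ on all of $I$ with $f'(G_0\circ f)$ directly through \cref{l:compare} would require controlling $\int_{a_i}^{a_{i+1}}\abs{f'}\,\bigl(\norm{G_i-G_0}\circ f\bigr)\,d\lambda$, and this is \emph{not} bounded by $\normx{G_i-G_0}{L^1(J;F)}$ once $f$ oscillates---hence the detour through the pointwise identity $X(t,s)=Y_i(f(t),f(s))$ on each $K_i$.
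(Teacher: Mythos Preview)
Your argument is correct and, in its core estimate, genuinely different from the paper's. Both proofs start identically: \cref{c:subst} identifies $X$ on each $[a_i,a_{i+1}]$ with $Y_i\circ(f\times f)$, the cocycle relation factors $X(t,s)$ through the breakpoints, and \cref{l:compare} supplies the difference estimate between consecutive $Y_j$. The divergence is in how the resulting product is bounded. The paper fixes a base point $c=\inf f(I)$, writes each factor as $Y_i(f(\cdot))Y_i(f(\cdot))^{-1}$ with $Y_i(v):=Y_i(v,c)$, and regroups into two endpoint factors $Y_l(f(t))$, $Y_k(f(s))^{-1}$ and ``interface'' operators $I_j=Y_j(f_j)^{-1}Y_{j-1}(f_j)$; each $\norm{I_j}$ is then bounded individually by $e^{N^{3+2N}\normx{G_j-G_{j-1}}{L^1}}$, which produces the two outer $N$'s and the exponent $N^{3+2N}$. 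You instead run a telescoping induction: replace the rightmost $Y_p(u_1,u_0)$ by $Y_{p+1}(u_1,u_0)$ up to an error controlled by \cref{l:compare}, fuse $Y_{p+1}(u_2,u_1)Y_{p+1}(u_1,u_0)=Y_{p+1}(u_2,u_0)$ via the cocycle, and recurse. This avoids the base point entirely and yields the visibly sharper bound $\norm{X(t,s)^{\pm1}}\le N\,e^{N^2V}$, which you then relax to the stated $N^2e^{N^{3+2N}V}$. Your inequality $1+N(e^{Nv}-1)\le e^{N^2v}$ for $N\ge1$, $v\ge0$ is valid (both sides agree at $v=0$ and the derivative of the right side dominates), and the induction closes because your hypothesis is formulated for \emph{arbitrary} points $u_0,\dots,u_m\in J$, so the fused sequence $u_0,u_2,\dots,u_m$ is admissible. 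The paper's grouping is perhaps more transparent to read off; your fusion argument buys a better constant.
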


\begin{proof}
Let $f$ and $A$ be as above. Then, for all $i < n$, by \cref{i:subst-l1} of \cref{c:subst}, or directly by \cref{t:cov}, we have
\[
f'(G_i\circ f) \in L^1_{loc}(I;F) = L^1(I;F).
\]
Hence, for all $i<n$,
\[
\mathbf1_{\coint{a_i,a_{i+1}}}f'(G_i\circ f) \in L^1(I;F),
\]
but
\[
\mathbf1_{\coint{a_i,a_{i+1}}}f'(G_i\circ f) = \mathbf1_{\coint{a_i,a_{i+1}}}A.
\]
Thus $A \in L^1(I;F)$; that is, \cref{i:stability-l1} holds.

Now let $X$ be an evolution operator for $A$ in $E$. Observe that, for all $i < n$, since $G_i \in L^1(J;F)$, there exists a (unique) evolution operator $Y_i$ for $G_i$ in $E$ by \cref{i:evolop-ex} of \cref{t:evolop}. Put
\[
Y_i(v) := Y_i(v,c), \quad c := \inf f(I)
\]
for sake of brevity ($i < n$, $v \in J$), which makes sense since $f$ is continuous and $I$ is compact, so that $c \in f(I) \subset J$.

Let $s,t \in I$. In case $s,t \in [a_i,a_{i+1}]$ for some $i < n$, we have
\[
X(t,s) = Y_i(f(t),f(s))
\]
by means of \cref{c:subst}, \cref{i:subst-evolop} and the uniqueness of the evolution operator for $A|_{[a_i,a_{i+1}]}$ in $E$, which is due to \cref{i:evolop-ex} of \cref{t:evolop}. Moreover,
\[
Y_i(f(t),f(s)) = Y_i(f(t),c)Y_i(c,f(s)) = Y_i(f(t))Y_i(f(s))^{-1}
\]
according to \cref{i:evolop-trans} of \cref{t:evolop}. Assume $s \leq t$ now. Then evidently there exist natural numbers $k \leq l < n$ such that $s \in [a_k,a_{k+1}]$ and $t \in [a_l,a_{l+1}]$. Thus we may write
\begin{align*}
X(t,s) & = X(t,a_l) X(a_l,a_{l-1}) \dots X(a_{k+2},a_{k+1}) X(a_{k+1},s) \\
& = Y_l(f(t))Y_l(f_l)^{-1}Y_{l-1}(f_l) \dots Y_{k+1}(f_{k+1})^{-1}Y_k(f_{k+1})Y_k(f(s))^{-1} \\
& = Y_l(f(t)) I_l \dots I_{k+1} Y_k(f(s))^{-1},
\end{align*}
where we put
\[
I_j := Y_j(f_j)^{-1}Y_{j-1}(f_j), \quad f_j := f(a_j)
\]
for $0 < j < n$.

Note that for all $i < n$, by \cref{c:standard_estimate}, we have
\[
\norm{Y_i(v,u)^{\pm1}} \leq e^{\int_u^v \norm{G_i} \,d\lambda} \leq e^{\normx{G_i}{L^1(J;F)}} \leq N
\]
for all $u,v \in J$ such that $u \leq v$. Therefore for all $0 < j < n$, applying \cref{l:compare}, we deduce
\begin{align*}
\norm{Y_j(f_j)^{\pm1} - Y_{j-1}(f_j)^{\pm1}}
& \leq N \paren*{e^{N\int_c^{f_j} \norm{G_j - G_{j-1}} \,d\lambda} - 1} \\
& \leq N \paren*{e^{N \normx{G_j - G_{j-1}}{L^1(J;F)}} - 1} \\
& \leq N e^{N \normx{G_j - G_{j-1}}{L^1(J;F)}} N \normx{G_j - G_{j-1}}{L^1(J;F)} \\
& \leq N^2 N^{2N} \normx{G_j - G_{j-1}}{L^1(J;F)},
\end{align*}
where we employ the elementary estimate (look at the power series expansions on both sides)
\[
e^x - 1 \leq e^xx, \quad \forall x \geq 0,
\]
in order to obtain the second last inequality; the very last inequality follows using the triangle inequality
\[
\normx{G_j - G_{j-1}}{L^1(J;F)} \leq \normx{G_j}{L^1(J;F)} + \normx{G_{j-1}}{L^1(J;F)} \le 2\log N
\]
in the exponent.
In consequence, for all $0 < j < n$, as
\[
I_j - \id_E = (Y_j(f_j)^{-1} - Y_{j-1}(f_j)^{-1})Y_{j-1}(f_j),
\]
we infer
\begin{equation} \label{e:Ibound}
\begin{split}
\norm{I_j}
& \leq  \norm{I_j - \id_E} + \norm{\id_E} \\
& \leq  \norm{Y_j(f_j)^{-1} - Y_{j-1}(f_j)^{-1}}\norm{Y_{j-1}(f_j)} + 1 \\
& \leq N^2 N^{2N} \normx{G_j - G_{j-1}}{L^1(J;F)} N + 1 \\
& \leq e^{N^{3 + 2N} \normx{G_j - G_{j-1}}{L^1(J;F)}}
\end{split}
\end{equation}
eventually using the elementary estimate
\[
x + 1 \leq e^x, \quad \forall x \geq 0.
\]

Hence,
\[
\norm{I_l \dots I_{k+1}} \leq \prod_{j=k+1}^l \norm{I_j} \leq e^{N^{3 + 2N} \sum_{j = k+1}^l \normx{G_j - G_{j-1}}{L^1(J;F)}} \leq e^{N^{3 + 2N} V}.
\]
So finally we obtain
\[
\norm{X(t,s)} \leq \norm{Y_l(f(t))} \norm{I_l \dots I_{k+1}} \norm{Y_k(f(s))^{-1}} \leq N^2 e^{N^{3 + 2N} V}.
\]
This means we have proven the “$+$” case of \cref{i:stability-Xbound}. The “$-$” case is treated along the same lines. I give only an indication.
Remarking that, for all $0 < j < n$,
\[
I_j^{-1} - \id_E = Y_{j-1}(f_j)^{-1}(Y_j(f_j) - Y_{j-1}(f_j)),
\]
you adapt \cref{e:Ibound} in order to bound $\norm{I_j^{-1}}$. Then writing
\[
X(t,s)^{-1} = Y_k(f(s))I_{k+1}^{-1} \dots I_l^{-1}Y_l(f(t))^{-1},
\]
you finish up as before.
\end{proof}

\begin{rema}
\label{r:stability}
For a specific $f$ the upper bound in \cref{i:stability-Xbound} of \cref{l:stability} can be improved, possibly, by passing from $J$ to $f(I)$ and from $G_i$ to $G_i|_{f(I)}$ for all $i<n$. The new upper bound will then, however, depend on $f$ by way of depending on $f(I)$. In particular, the new upper bound is no longer uniform in $f$. For precisely this reason I have chosen not to use the latter idea in the formulation of \cref{l:stability}. The way the lemma stands, it stresses the fact that for given $J$, $E$, $G$ you have a single bound for all systems $A$---no matter what $f$ is, and(!) no matter what $a$ is.
\end{rema}

Let $F$ be a metric space. Then for any set $J$, the set $F^J$ of functions from $J$ to $F$ comes equipped with a \emph{supremum distance}, which I write $d_\infty$ (sloppily neglecting the references to $J$ and $F$). In concrete terms, given $x,y \in F^J$, we have
\[
d_\infty(x,y) = \sup\set{d_F(x(t),y(t)) \mid t \in J},
\]
where $d_F$ signifies the distance of the metric space $F$ and the supremum is taken with respect to the extended nonnegative reals $[0,\infty]$. Note that $d_\infty$ is generally not a metric on the set $F^J$ in the ordinary sense of the word as $d_\infty$ might take the value $\infty$. Nevertheless $d_\infty$ is what is called an \emph{extended (real-valued) metric}. A topology on $F^J$ is defined by $d_\infty$ the standard way, and continuity with respect to this topology has an $\epsilon$-$\delta$ characterization.

Now let $G \colon I \to F$ be a function such that $I \subset \R$. Then the \emph{total variation} of $G$ with respect to $F$ is defined as
\[
V = V_F(G) = \sup \left\{\sum_{i<n}d_F(G(a_i),G(a_{i+1})) \mid n \in \N,\, a \in \mathcal S_n(I)\right\},
\]
where the supremum is taken with respect to the extended nonnegative reals $[0,\infty]$ again. Specifically, we have $V = 0$ if $I = \emptyset$. We say that $G$ is of \emph{bounded variation} with respect to $F$ when $V < \infty$.

When $F$ is a normed vector space, the previous definitions apply passing from $F$ to its associated metric space. The latter is given by $d_F(x,y) = \normx{y-x}F$ for all $x,y \in F$ of course.

Let $n \in \N$ and $a \in \mathcal S_n(\R)$. Then the \emph{mesh} of $a$ is defined as the number
\[
\mu(a) := \max_{i<n} \abs{a_{i+1} - a_i} = \max_{i<n} (a_{i+1} - a_i),
\]
where the maximum over the empty set is $0$ per definitionem; that is, we have $\mu(a) = 0$ if and only if $n=0$ and $a = (a_0)$ is a sequence of length $1$.

\begin{theo}
\label{t:main}
Let $I,J \subset \R$ be intervals, $E$ a Banach space, $F = \End(E)$,
\[
G \colon I \to L^\infty_{loc}(J;F) \cap L^1(J;F)
\]
continuous with respect to the supremum distance on $F^J$ and of bounded variation with respect to $L^1(J;F)$.
Denote $V$ the total variation of $G$ with respect to $L^1(J;F)$ and put
\[
N := e^{\sup_{t \in I} \normx{G(t)}{L^1(J;F)}}.
\]
Then, for all $f \in AC_{loc}(I)$ such that $f(I) \subset J$, when
\[
A \colon I \to F, \quad A(t) = f'(t) \cdot G(t)(f(t)),
\]
we have
\begin{enumerate}
\item \label{i:main-l1} $A \in L^1_{loc}(I;F)$, and
\item \label{i:main-Xbound} if $X$ is an evolution operator for $A$ in $E$,
\begin{equation}
\label{e:Xbound}
\norm{X(t,s)^{\pm1}} \leq C := N^2 e^{N^{3 + 2N} V}
\end{equation}
for all $s,t \in I$ with $s \leq t$.
\end{enumerate}
\end{theo}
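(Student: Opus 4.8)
The plan is to reduce \cref{t:main} to \cref{l:stability} by a careful approximation argument.  First I would dispose of the case $I=\emptyset$ and otherwise fix $f\in AC_{loc}(I)$ with $f(I)\subset J$ and the corresponding $A(t)=f'(t)\cdot G(t)(f(t))$.  The measurability and local integrability statement \cref{i:main-l1} is essentially local, so I would work on an arbitrary compact subinterval $[p,q]\subset I$; there $f\in AC([p,q])$ and the continuity of $t\mapsto G(t)$ into $L^1(J;F)$ together with the identity $A(t)=f'(t)(G(t)(f(t)))$ should yield $A|_{[p,q]}\in L^1([p,q];F)$.  Concretely, I would approximate $t\mapsto G(t)$ uniformly on $[p,q]$ by step functions $G^{(m)}$ taking finitely many values among the $G(c)$, $c\in[p,q]$, using uniform continuity of $G$ on the compact set $[p,q]$; then $f'(t)(G^{(m)}(t)(f(t)))$ is, on each piece, of the form $f'\,(G_i\circ f)$, hence in $L^1$ by \cref{t:cov}, and $\|A-A^{(m)}\|_{L^1([p,q];F)}\le\int_{p}^{q}|f'|\,d_\infty(G(t),G^{(m)}(t))\,d\lambda\to 0$.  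Since $L^1([p,q];F)$ is complete, $A|_{[p,q]}\in L^1$, which gives \cref{i:main-l1} and in particular guarantees (via \cref{i:evolop-ex} of \cref{t:evolop}) the existence of evolution operators on compact subintervals.

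For \cref{i:main-Xbound} the key point is that the bound $C=N^2e^{N^{3+2N}V}$ from \cref{l:stability} is uniform in $f$ \emph{and} in the partition $a$, which is exactly what \cref{r:stability} emphasizes.  Fix $s\le t$ in $I$ and an evolution operator $X$ for $A$; by \cref{i:evolop-trans} of \cref{t:evolop} it suffices to bound $\|X(t,s)^{\pm1}\|$, and since $X$ restricted to $[s,t]$ is the unique evolution operator there, I may assume $I=[s,t]$ is compact.  Now I would choose a sequence of partitions $a^{(m)}\in\mathcal S_{n_m}(\R)$ of $[s,t]$ with mesh $\mu(a^{(m)})\to 0$, and define $G^{(m)}=(G^{(m)}_0,\dots,G^{(m)}_{n_m-1})$ by $G^{(m)}_i:=G(a^{(m)}_i)$ (a legitimate $n_m$-tuple of elements of $L^\infty_{loc}(J;F)\cap L^1(J;F)$).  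Let $A^{(m)}$ be the associated piecewise-substituted operator, $A^{(m)}(t)=f'(t)G^{(m)}_i(f(t))$ on $\coint{a^{(m)}_i,a^{(m)}_{i+1}}$.  \Cref{l:stability} applies to each $(a^{(m)},G^{(m)})$ and yields, for the evolution operator $X^{(m)}$ of $A^{(m)}$,
\[
\norm{X^{(m)}(t,s)^{\pm1}}\le \bigl(e^{\max_i\|G^{(m)}_i\|_{L^1}}\bigr)^2 e^{(\cdots)V^{(m)}},
\]
where $\max_i\|G^{(m)}_i\|_{L^1}\le\sup_{\tau\in I}\|G(\tau)\|_{L^1}=\log N$ and, crucially, $V^{(m)}=\sum_{0<j}\|G(a^{(m)}_j)-G(a^{(m)}_{j-1})\|_{L^1}\le V$ because it is one of the sums in the definition of the total variation of $G$ with respect to $L^1(J;F)$.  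Hence every $X^{(m)}$ satisfies the \emph{same} bound $\norm{X^{(m)}(t,s)^{\pm1}}\le N^2e^{N^{3+2N}V}=C$.

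It remains to pass to the limit, i.e.\ to show $X^{(m)}(t,s)\to X(t,s)$ in $\End(E)$ (and likewise for the inverses, or one invokes $X^{(m)}(t,s)^{-1}=X^{(m)}(s,t)$ and the same argument with $s,t$ swapped), whence $\norm{X(t,s)^{\pm1}}\le C$ by continuity of the norm.  For this I would estimate $\|A^{(m)}-A\|_{L^1(I;F)}$: on $\coint{a^{(m)}_i,a^{(m)}_{i+1}}$ one has $\|A^{(m)}(\tau)-A(\tau)\|\le |f'(\tau)|\,d_\infty\bigl(G(a^{(m)}_i),G(\tau)\bigr)$, and since $G$ is uniformly continuous on the compact interval $I$ with respect to $d_\infty$, the modulus of continuity times $\mu(a^{(m)})\to 0$ forces $d_\infty(G(a^{(m)}_i),G(\tau))\to 0$ uniformly; as $f'\in L^1(I)$, dominated convergence gives $\|A^{(m)}-A\|_{L^1(I;F)}\to 0$.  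Then \cref{l:compare}, applied with $A_1=A^{(m)}$, $A_2=A$ (using the already-established crude bound $\norm{X^{(m)}(t,s)^{\pm1}}\le e^{\int\|A^{(m)}\|}$ from \cref{c:standard_estimate} as the input $N$, $\nu_1=0$), yields
\[
\norm{X(t,s)^{\pm1}-X^{(m)}(t,s)^{\pm1}}\le (\text{const})\bigl(e^{(\text{const})\|A-A^{(m)}\|_{L^1(I;F)}}-1\bigr)\xrightarrow{m\to\infty}0,
\]
which is exactly the convergence needed.  The main obstacle I anticipate is precisely this last approximation bookkeeping: one must make sure the constants in the crude estimate for $\norm{X^{(m)}(t,s)^{\pm1}}$ used to feed \cref{l:compare} are themselves bounded uniformly in $m$ (they are, since $\int_I\|A^{(m)}\|\le\mathrm{Var}_I(f)\cdot\log N$ or similar), and that the uniform continuity of $G$ with respect to the extended metric $d_\infty$ on the compact interval $I$ genuinely gives a modulus of continuity — which it does, because $G(I)$ is a compact, hence $d_\infty$-bounded, subset of $F^J$, so $d_\infty$ behaves like an honest metric on the relevant set.
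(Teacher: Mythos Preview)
Your proposal is correct and follows essentially the same route as the paper: reduce to a compact interval, approximate $G$ by piecewise-constant step functions along a sequence of partitions with mesh tending to zero, apply \cref{l:stability} uniformly (using $V^{(m)}\le V$ and $\max_i\|G^{(m)}_i\|_{L^1}\le\log N$), show $\|A-A^{(m)}\|_{L^1}\to 0$ via uniform continuity of $G$ and dominated convergence, and pass to the limit through \cref{l:compare}. The only difference worth noting is that the paper feeds the sharp bound $C$ from \cref{l:stability} directly into \cref{l:compare} as the constant $N$ (obtaining $\|X(t,s)^{\pm1}\|\le Ce^{C\epsilon}$ and letting $\epsilon\to 0$), which spares you the detour through the crude estimate $e^{\int\|A^{(m)}\|}$ and the attendant bookkeeping about its uniform boundedness in $m$.
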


\begin{rema}
\label{r:main}
In case $I = \emptyset$, we can take the supremum in the definition of $N$ with respect to $\bar\R = [-\infty,\infty]$, in order to get $N = 0$, or with respect to $[0,\infty]$, in order to get $N = 1$; the theorem remains true both ways.

Assume there exists an element $t_0 \in I$. Then, for all $t \in I$, we have
\begin{align*}
\normx{G(t)}{L^1(J;F)} &\leq \normx{G(t) - G(t_0)}{L^1(J;F)} + \normx{G(t_0)}{L^1(J;F)} \\ &\leq V + \normx{G(t_0)}{L^1(J;F)}.
\end{align*}
Thus the supremum appearing in the definition of $N$ exists in $\R$.
\end{rema}

\begin{proof}[Proof of \cref{t:main}]
First of all, note that it suffices to prove the assertion in case $I$ is compact and nonempty. In order to deduce the general case from the special case, let $K \subset I$ be a compact, nonempty interval. Then applying the theorem to $K$, $G|_K$, and $f|_K$ in place of $I$, $G$, $f$, we deduce that $A|_K \in L^1_{loc}(K;F) = L^1(K;F)$. Furthermore, when $X$ is an evolution operator for $A$ in $E$, then $X|_{K\times K}$ is an evolution operator for $A|_K$. Thus \cref{e:Xbound} holds for all $s,t \in K$ with $s\le t$ since in passing from $K$, $G|_K$ to $I$, $G$, the numbers $V$, $N$, and thus $C$, can only get larger. As for all $s,t \in I$ with $s\le t$ there exists a compact nonempty interval $K \subset I$ such that $s,t \in K$, we infer \cref{i:main-Xbound}.

Assume $I$ compact and nonempty now. Let $f$ and $A$ be as above. Moreover, let $n \in \N$ and $a \in \mathcal S_n(\R)$ with $I = [a_0,a_n]$.\footnote{$a$ is what is called a \emph{partition} of $I$.} Define
\[
A_a \colon I \to F, \quad A_a(t) = f'(t)\cdot G(a_i)(f(t))
\]
whenever $i<n$ and $t \in \coint{a_i,a_{i+1}}$, or when $i = n$ and $t = a_n$. Then according to \cref{i:stability-l1} of \cref{l:stability}, we have $A_a \in L^1(I;F)$. According to \cref{i:stability-Xbound} of \cref{l:stability}, we have
\[
\norm{X_a(t,s)^{\pm1}} \leq C
\]
for all $s,t \in I$ with $s \leq t$ when $X_a$ is an evolution operator for $A_a$ in $E$; note that
\[
\max_{i<n} \normx{G(a_i)}{L^1(J;F)} \leq \sup_{t \in I} \normx{G(t)}{L^1(J;F)}
\]
and
\[
\sum_{0 < j < n} \normx{G(a_j) - G(a_{j-1})}{L^1(J;F)} \leq V.
\]
Moreover, for all $i < n$ and all $t \in \coint{a_i,a_{i+1}}$, but also for $i = n$ and $t = a_n$, we have
\begin{align*}
\norm{A(t) - A_a(t)} & = \abs{f'(t)}\cdot \normx{G(t)(f(t)) - G(a_i)(f(t))}F \\
& \leq \abs{f'(t)}\cdot d_\infty(G(a_i),G(t)) \\
& \leq \abs{f'(t)}\cdot \sup\set{d_\infty(G(a_i),G(\tau)) \mid \tau \in [a_i,a_{i+1}], i < n}.
\end{align*}

Observe that there exists a sequence $(a^k)_{k \in \N}$ of partitions of $I$ such that the corresponding sequence $(\mu(a^k))_{k \in \N}$ of meshes converges to $0$. I contend that the sequence $(A_{a^k})_{k \in \N}$ converges to $A$ pointwise on $I$. Indeed, as the function $G$ is continuous (with respect to the supremum distance) and $I \subset \R$ is compact, the function $G$ is uniformly continuous. In other words, for all numbers $\epsilon > 0$ there exists a number $\delta > 0$ such that
\[
d_\infty(G(\sigma),G(\tau)) < \epsilon
\]
whenever $\sigma,\tau \in I$ and $\abs{\tau - \sigma} < \delta$. Therefore, for all $\epsilon > 0$, there exists a number $k_0 \in \N$ such that for all $k \in \N$ with $k \geq k_0$ and all $t \in I$, we have
\[
\norm{A(t) - A_{a^k}(t)} \leq \abs{f'(t)}\cdot \epsilon.
\]
This proves the pointwise convergence.

Moreover (taking $\epsilon = 1$), we see that there exists a number $k_1 \in \N$ such that
\[
\norm{A(t) - A_{a^k}(t)} \leq \abs{f'(t)}
\]
holds for all $k \in \N$ with $k \geq k_1$ and all $t \in I$. In turn, for all $k \geq k_1$ and all $t \in I$,
\begin{align*}
\norm{A_{a^k}(t)} & \leq \norm{A_{a^k}(t) - A(t)} + \norm{A(t) - A_{a^{k_1}}(t)} + \norm{A_{a^{k_1}}(t)} \\
& \leq 2\abs{f'(t)} + \norm{A_{a^{k_1}}(t)}.
\end{align*}
Since $f' \in L^1(I;\R)$ (e.g., by \cite[Proposition 3.8]{MR2527916}) and $A_{a^l} \in L^1(I;F)$ for $l = 0,1,\dots,k_1$, this implies the existence of a function $g \in L^1(I;\R)$ such that $\norm{A_{a^k}} \le g$ for all $k \in \N$.
Hence by the dominated convergence theorem \cite[Theorem 1.1.8]{MR1886588}, $A$ is an element of $L^1(I;F)$ and
\[
\lim_{k \to \infty} \int_I \norm{A - A_{a^k}} \,d\lambda = 0.
\]
In particular, we have \cref{i:main-l1}.

Now let $X$ be an evolution operator for $A$ in $E$. Let $\epsilon > 0$. Then by the above there exists a natural number $l$ such that
\[
\int_I \norm{A - A_{a^l}} \,d\lambda < \epsilon.
\]
So by means of \cref{l:compare} we infer that, for all $s,t \in I$ such that $s \leq t$,
\[
\norm{X(t,s)^{\pm1}} \leq C e^{C \int_s^t \norm{A - A_{a^l}} \,d\lambda} < C e^{C \epsilon}.
\]
Yet as $\epsilon>0$ was arbitrary, this entails \cref{e:Xbound} for all $s,t \in I$ with $s\le t$; that is, we have established \cref{i:main-Xbound}.
\end{proof}

In order to demonstrate the strength of \cref{t:main} I would like to prove the bistability of some elementary, yet at first glance hard to tackle, systems.

\begin{coro}
\label{c:constant_G}
Let $I$, $E$, $F$ be as in \cref{t:main}, $\tilde G \in C(I;F)$ such that $\tilde G$ is of bounded variation with respect to $F$, and $f \in AC_{loc}(I)$ such that $f(I)$ is bounded in $\R$.
Let $A \colon I \to F$ be given by
\[
A(t) = f'(t)\tilde G(t), \quad \forall t \in I.
\]
Then $A$ is bistable in $E$ in the sense that there exists a number $C > 0$ such that, for all $s,t \in I$,
\begin{equation}
\label{e:constant_G}
\norm{X(t,s)} \le C
\end{equation}
when $X$ is an evolution operator for $A$ in $E$ (compare this notion of bistability to the one given in \cref{s:intro}).
\end{coro}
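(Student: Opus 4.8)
The plan is to deduce the corollary from \cref{t:main} by viewing $\tilde G$ as a family of \emph{constant} $F$-valued functions on a suitable bounded interval. If $I = \emptyset$ there is nothing to prove (take $C = 1$), so assume $I$ is inhabited and fix $t_0 \in I$. Since $f(I)$ is bounded, fix a compact interval $J \subset \R$ with $f(I) \subset J$. For each $t \in I$ let $G(t) \in F^J$ be the function constantly equal to $\tilde G(t)$. As $J$ is compact, $G(t) \in L^\infty_{loc}(J;F) \cap L^1(J;F)$, and
\[
f'(t)\cdot G(t)(f(t)) = f'(t)\tilde G(t) = A(t), \qquad \forall t \in I,
\]
so $A$ is exactly of the form treated by \cref{t:main}.

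First I would verify the hypotheses on $G$. For $s,t \in I$ we have $d_\infty(G(s),G(t)) = \normx{\tilde G(s)-\tilde G(t)}{F}$ (the supremum over $J$ of a constant function), so $G$ is continuous with respect to the supremum distance precisely because $\tilde G$ is continuous. Likewise $\normx{G(a_i)-G(a_{i+1})}{L^1(J;F)} = \lambda(J)\normx{\tilde G(a_i)-\tilde G(a_{i+1})}{F}$ for any $a_i < a_{i+1}$ in $I$, where $\lambda(J)$ denotes the length of $J$; hence $G$ is of bounded variation with respect to $L^1(J;F)$ with total variation $V := \lambda(J)\,V_F(\tilde G) < \infty$. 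Finally, bounded variation forces $\tilde G$ to be bounded, $\normx{\tilde G(t)}{F} \le \normx{\tilde G(t_0)}{F} + V_F(\tilde G)$ for all $t \in I$, so $\sup_{t \in I}\normx{G(t)}{L^1(J;F)} = \lambda(J)\sup_{t\in I}\normx{\tilde G(t)}{F} < \infty$ and the constant $N$ of \cref{t:main} is finite.

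Thus \cref{t:main} applies: it yields $A \in L^1_{loc}(I;F)$ — so that ``evolution operator for $A$'' is a meaningful notion — together with the bound $\norm{X(t,s)^{\pm1}} \le C := N^2 e^{N^{3+2N}V}$ for all $s \le t$ in $I$ and every evolution operator $X$ for $A$ in $E$. To obtain \cref{e:constant_G} for \emph{all} $s,t \in I$ it remains to treat the case $t < s$: there $X(t,s) = X(s,t)^{-1}$ by \cref{i:evolop-trans} of \cref{t:evolop}, and the ``$-$'' half of the estimate, applied to the pair $t \le s$, gives $\norm{X(t,s)} = \norm{X(s,t)^{-1}} \le C$. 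This establishes bistability with the constant $C$ above. I do not anticipate a genuine obstacle; the only steps requiring (routine) care are that the constant-function assignment $t \mapsto G(t)$ inherits continuity and bounded variation from $\tilde G$ — which works exactly because $J$ is bounded — and the passage from bounded variation to boundedness of $\tilde G$, which is what keeps $N$ finite.
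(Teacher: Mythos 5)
Your proof is correct and follows essentially the same route as the paper: recast $A(t) = f'(t)\tilde G(t)$ as $A(t) = f'(t)\cdot G(t)(f(t))$ with $G(t)$ the constant function $\tilde G(t)$ on a bounded interval $J$ containing $f(I)$, check that $G$ inherits continuity in $d_\infty$ and $L^1$-bounded variation from $\tilde G$ (with $V_{L^1(J;F)}(G) = \lambda(J)V_F(\tilde G)$), and then invoke \cref{t:main} together with \cref{i:evolop-trans} of \cref{t:evolop} for the case $t<s$. The only cosmetic difference is that the paper sets $J := f(I)$ directly (a bounded interval since $f$ is continuous on an interval), while you enlarge to a compact $J\supset f(I)$; both are fine, and you supply a couple of routine details the paper leaves tacit (the empty-$I$ case, and the finiteness of $\sup_t\norm{\tilde G(t)}$ via bounded variation, which the paper delegates to \cref{r:main}).
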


\begin{proof}
Observe that $J := f(I)$ is an interval. Let $G \colon I \to F^J$ be given by
\[
G(t)(u) = \tilde G(t)
\]
for all $t \in I$ and all $u \in J$. Then, for all $t \in I$, we have $G(t) \in L^\infty(J;F)$. Moreover, $L^\infty(J;F) \subset L^1(J;F)$ as $J$ is bounded. $G$ is evidently continuous with respect to the supremum distance on $F^J$, and
\[
V := V_{L^1(J;F)}(G) = \lambda(J)V_F(\tilde G) < \infty.
\]
Note that
\[
A(t) = f'(t)\cdot G(t)(f(t)), \quad \forall t\in I.
\]
Define
\[
N := e^{\sup_{t\in I}\normx{G(t)}{L^1(J;F)}} = e^{\lambda(J)\sup_{t\in I}\norm{\tilde G(t)}}
\]
and
\[
C := N^2e^{N^{3+2N}V}.
\]
Then $C>0$ and according to \cref{t:main}, \cref{e:constant_G} holds for all $s,t \in I$ when $X$ is an evolution operator for $A$ in $E$ (employ \cref{i:evolop-trans} of \cref{t:evolop} to write $X(t,s) = X(s,t)^{-1}$ in case $t< s$).
\end{proof}
%

\begin{exam}
\label{x:elementary}
Let $I = \coint{0,\infty}$, $E = \R^2$ (equipped with an arbitrary norm), $F = \End(E)$, and
\[
\tilde G(t) = \begin{pmatrix}2\arctan t&\sqrt{t+1}-\sqrt{t}\\-\frac1{1+t^2}&1 + e^{-t}\end{pmatrix}, \quad \forall t\in I,
\]
where we interpret the $2\times2$ matrices as elements of $F$ by virtue of the standard identification $\R^{2\times 2} \to \End(\R^2)$.

Then $I$ is an interval, $E$ a Banach space, and $\tilde G \in C(I;F)$. Moreover, each of the four component functions of $\tilde G$ is monotonic and bounded, whence of bounded variation with respect to the normed vector space $(\R,\abs\cdot)$. In consequence, $\tilde G$ is of bounded variation with respect to $F$. To establish the latter fact, note for one that the given norm on $F$ can be dominated by a constant times the $1$-norm on $F \cong \R^{2\times 2} \cong \R^4$ (as $F$ is finite-dimensional). For another, note that the total variation of $\tilde G$ with respect to the entrywise $1$-norm is bounded above by the sum of the total variations of the components $\tilde G_{ij}$, $(i,j) \in 2\times2$, with respect to $(\R,\abs\cdot)$.

By means of \cref{c:constant_G} we conclude that for all bounded functions $f \in AC_{loc}(I)$, the system
\[
A \colon I \to F, \quad A(t) = f'(t)\tilde G(t),
\]
is bistable in $E$. To make things entirely explicit, take $f(t) = \sin t$, for instance (observe that $C^1(I) \subset AC_{loc}(I)$). 

In the absence of \cref{t:main} it would be very much unclear how to decide whether $A$ is bistable in $E$ or not. Specifically the naive estimate of \cref{c:standard_estimate} turns out unfruitful. As $\tilde G(t)$ tends to $\tilde G(\infty) := \paren*{\begin{smallmatrix}\pi&0\\0&1\end{smallmatrix}}$ when $t$ tends to infinity and $\norm{\tilde G(\infty)} > 0$, there exists a number $\delta>0$ as well as an element $t_0 \in I$ such that
\[
\norm{A(\tau)} = \abs{f'(\tau)} \norm{\tilde G(\tau)} \geq \abs{\cos\tau} \delta, \quad \forall \tau > t_0.
\]
Therefore, for all $s \in I$, the integral $\int_s^t \norm{A} \,d\lambda$ tends to infinity as $t \in I$ tends to infinity. Compare this discussion to the one in \cref{s:intro}.
\end{exam}

\section{Bounds on parallel transports}
\label{s:bounds}

Let me recall how to compute the total variation of a nice (e.g., $C^1$) function.

\begin{prop}
\label{p:TV}
Let $a$ and $b$ be real numbers, $a\le b$, $F$ a Banach space, $g \in L^1([a,b];F)$, and $G \colon [a,b] \to F$ given by $G(t) = \int_a^t g \,d\lambda$ for all $t \in [a,b]$. Then
\[
V_F(G) = \int_{[a,b]} \norm{g} \,d\lambda.
\]
\end{prop}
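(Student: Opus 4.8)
The plan is to prove the two inequalities $V_F(G) \le \int_{[a,b]} \norm{g}\,d\lambda$ and $V_F(G) \ge \int_{[a,b]} \norm g \,d\lambda$ separately.

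For the first (upper bound on the variation), I would take an arbitrary $n \in \N$ and an arbitrary partition $a = (a_0, \dots, a_n) \in \mathcal S_n([a,b])$ (the degenerate cases $a = b$ or $n = 0$ being trivial since both sides vanish or reduce to a single point). For each $i < n$ one has, by the definition of $G$ and the additivity of the Bochner integral,
\[
G(a_{i+1}) - G(a_i) = \int_{a_i}^{a_{i+1}} g \,d\lambda,
\]
so the norm inequality for the Bochner integral gives
\[
\norm{G(a_{i+1}) - G(a_i)} = \norm*{\int_{a_i}^{a_{i+1}} g \,d\lambda} \le \int_{a_i}^{a_{i+1}} \norm g \,d\lambda.
\]
Summing over $i < n$ and using that the intervals $[a_i, a_{i+1}]$ cover $[a_0, a_n] \subset [a,b]$ with pairwise negligible overlaps yields $\sum_{i<n} \norm{G(a_{i+1}) - G(a_i)} \le \int_{[a,b]} \norm g \,d\lambda$. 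Taking the supremum over all partitions gives $V_F(G) \le \int_{[a,b]} \norm g \,d\lambda$.

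For the reverse inequality I would use a density/approximation argument. The function $t \mapsto \norm{g(t)}$ lies in $L^1([a,b];\R)$, so given $\epsilon > 0$ there is a step function $h = \sum_{i<n} c_i \mathbf 1_{[a_i, a_{i+1})}$ (with $a \in \mathcal S_n([a,b])$, $a_0 = a$, $a_n = b$, and $c_i \in E$ — actually I want to approximate $g$ itself in $L^1([a,b];F)$ by an $F$-valued step function, which is possible by the very definition of Bochner integrability) with $\int_{[a,b]} \norm{g - h} \,d\lambda < \epsilon$. For this step function, $\sum_{i<n} \norm*{\int_{a_i}^{a_{i+1}} h \,d\lambda} = \sum_{i<n} \norm{c_i}(a_{i+1}-a_i) = \int_{[a,b]} \norm h \,d\lambda$. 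On the other hand, $\norm*{\int_{a_i}^{a_{i+1}} g\,d\lambda} \ge \norm*{\int_{a_i}^{a_{i+1}} h\,d\lambda} - \int_{a_i}^{a_{i+1}}\norm{g-h}\,d\lambda$, so summing over $i$,
\[
V_F(G) \ge \sum_{i<n} \norm{G(a_{i+1}) - G(a_i)} \ge \int_{[a,b]} \norm h \,d\lambda - \epsilon \ge \int_{[a,b]} \norm g \,d\lambda - 2\epsilon,
\]
using $\int \norm h \ge \int \norm g - \int \norm{g-h} \ge \int\norm g - \epsilon$ in the last step. Letting $\epsilon \to 0$ finishes the proof.

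The main obstacle, and the place that needs the most care, is the lower bound: one must make sure the approximating step function can be taken adapted to a genuine finite partition of $[a,b]$ by subintervals, so that the partition points $a_i$ literally form an element of $\mathcal S_n([a,b])$ and the sum over the partition is exactly a term appearing in the supremum defining $V_F(G)$. This is where the precise meaning of Bochner integrability (approximation by simple functions, hence, after reorganizing the level sets into intervals up to a null set, by interval step functions) is used; the slight subtlety is that simple functions have measurable — not necessarily interval — level sets, so one should first approximate in $L^1$ by an interval step function, which is standard but worth stating. Everything else is routine manipulation of the Bochner integral and the definition of total variation. (One may alternatively invoke the scalar case, known from the literature, applied to $t \mapsto \norm{g(t)}$ together with the Lebesgue differentiation theorem, but the self-contained approximation argument above seems cleaner in the Banach-space setting.)
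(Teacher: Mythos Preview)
Your argument is correct. The upper bound is immediate from the norm inequality for the Bochner integral, and your lower bound via approximation by interval step functions is the standard route; your remark that simple functions must first be replaced by genuine interval step functions (using inner/outer regularity of Lebesgue measure) is exactly the point that needs care, and it goes through without difficulty. One cosmetic slip: you write $c_i \in E$ where you mean $c_i \in F$.

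As for comparison with the paper: there is nothing to compare. The paper does not prove this proposition at all but simply cites \cite[Proposition 1.2.2 d)]{MR1886588}. Your write-up therefore supplies what the paper omits, and does so by the expected direct argument.
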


\begin{proof}
See \cite[Proposition 1.2.2 d)]{MR1886588}.
\end{proof}

Let $a\le b$ be real numbers, $I = [a,b]$, and $F$ a Banach space. Then $C^1_{pw}(I;F)$ denotes the set of all \emph{piecewise $C^1$} functions from $I$ to $F$---that is, $G \in C^1_{pw}(I;F)$ if and only if $G \colon I \to F$ is a function such that there exist a natural number $m$ and an element $a \in \mathcal S_m(\R)$ such that $a_0 = a$, $a_m = b$, and
\[
G|_{[a_i,a_{i+1}]} \in C^1([a_i,a_{i+1}];F)
\]
for all $i<m$.

Let $G \in C^1_{pw}(I;F)$. Then $G'$ denotes the derivative of $G$, which is by definition the ordinary derivative of $G$ in points at which $G$ is differentiable and $0 \in F$ otherwise.

\begin{coro}
\label{c:TV-pwc1}
Let $a,b \in \R$, $a\le b$, $I = [a,b]$, $F$ a Banach space, $G \in C^1_{pw}(I;F)$. Then
\[
V_F(G) = \int_I \norm{G'} \,d\lambda.
\]
\end{coro}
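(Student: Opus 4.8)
The plan is to reduce \cref{c:TV-pwc1} to \cref{p:TV} by exploiting additivity of the total variation over a partition adapted to the piecewise structure of $G$. First I would fix a partition $a = a_0 < a_1 < \dots < a_m = b$ witnessing $G \in C^1_{pw}(I;F)$, so that $G|_{[a_i,a_{i+1}]} \in C^1([a_i,a_{i+1}];F)$ for each $i < m$. On each subinterval $[a_i,a_{i+1}]$ the restriction $G|_{[a_i,a_{i+1}]}$ is $C^1$, hence (by the fundamental theorem of calculus for the Bochner integral, \cite[Proposition 1.2.3]{MR1886588}, or simply because $C^1$ functions are absolutely continuous) it satisfies
\[
G(t) = G(a_i) + \int_{a_i}^t G' \,d\lambda, \quad \forall t \in [a_i,a_{i+1}],
\]
with $G'|_{[a_i,a_{i+1}]} = (G|_{[a_i,a_{i+1}]})'$ almost everywhere — the two notions of derivative differ only at the finitely many partition points, which form a null set. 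Applying \cref{p:TV} on $[a_i,a_{i+1}]$ with $g = G'|_{[a_i,a_{i+1}]}$ then yields
\[
V_F\paren*{G|_{[a_i,a_{i+1}]}} = \int_{[a_i,a_{i+1}]} \norm{G'} \,d\lambda.
\]

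Next I would invoke additivity of total variation: for a function $G$ on $[a,b]$ and any point $c \in (a,b)$, one has $V_F(G) = V_F(G|_{[a,c]}) + V_F(G|_{[c,b]})$. This is a standard fact (refinement of partitions can only increase the approximating sums, and any partition can be refined to include $c$), and iterating it over $a_0,\dots,a_m$ gives
\[
V_F(G) = \sum_{i<m} V_F\paren*{G|_{[a_i,a_{i+1}]}} = \sum_{i<m} \int_{[a_i,a_{i+1}]} \norm{G'} \,d\lambda = \int_I \norm{G'} \,d\lambda,
\]
the last equality being additivity of the Bochner integral over the partition (the overlaps $\{a_i\}$ being null). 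That completes the proof.

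The only genuine subtlety — and the step I would be most careful about — is the interplay between the two meanings of "derivative" at the partition points: $G'$ as defined globally for $G \in C^1_{pw}(I;F)$ versus $(G|_{[a_i,a_{i+1}]})'$, the one-sided-at-the-endpoints derivative of the $C^1$ restriction. These agree except possibly at $a_1,\dots,a_{m-1}$, a finite hence null set, so they represent the same element of $L^1$ and the integrals coincide; I would state this explicitly rather than gloss over it. Everything else is routine: the $C^1$-implies-AC step and the additivity of $V_F$ are textbook, and no new estimates are needed. If one wanted an even shorter route, one could observe directly that $G \in AC(I;F)$ with $G(t) = G(a) + \int_a^t G'\,d\lambda$ (absolute continuity on each piece plus continuity at the junctions gives absolute continuity on all of $I$), and then apply \cref{p:TV} once, on all of $I$, with $g = G'$; I would probably present it this way, as it avoids the additivity lemma altogether.
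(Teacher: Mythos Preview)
Your proposal is correct. In fact, the paper's own proof is exactly the ``shorter route'' you sketch in your final paragraph: it observes that $G' \in L^1(I;F)$, notes that the fundamental theorem of calculus holds for $G$ on all of $I$ (so that $G(t) - G(a) = \int_a^t G'\,d\lambda$), and then applies \cref{p:TV} once, using $V_F(G) = V_F(G - G(a))$. Your primary argument via additivity of the total variation over the partition $a_0,\dots,a_m$ is also valid and handles the derivative-at-junction subtlety carefully, but it is a detour; since you already see the one-shot argument and say you would ``probably present it this way,'' you are aligned with the paper.
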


\begin{proof}
First of all, we observe that $G' \in L^1(I;F)$. Second of all, we note that the fundamental theorem of calculus is valid for $G$ in the sense that, for all $t \in I$, we have
\[
G(t) - G(a) = \int_a^t G' \,d\lambda.
\]
Finally, as $V_F(G) = V_F(G - G(a))$, we are finished by virtue of \cref{p:TV}.
\end{proof}
%
%
%

\begin{prop}
\label{p:TV-l1}
Let $a\le b$ be real numbers, $I = [a,b]$, $J$ an interval, $F$ a Banach space, $\tilde G \colon I \times J \to F$ a function such that there exist $m \in \N$ and $a \in \mathcal S_m(\R)$ such that $a_0 = a$, $a_m = b$, and, for all $i<m$, the restriction of $\tilde G$ to $[a_i,a_{i+1}] \times J$ is continuously partially differentiable with respect to the first variable. Assume that $G(t) := \tilde G(t,\_) \in L^1(J;F)$ for all $t \in I$. Then
\begin{equation}
\label{e:TV-l1}
V_{L^1(J;F)}(G) \le \int_{I \times J} \norm{D_1\tilde G} \,d\lambda^2,
\end{equation}
where the partial derivative $D_1\tilde G$ is understood to be zero in points at which $\tilde G$ is not partially differentiable with respect to the first variable.
\end{prop}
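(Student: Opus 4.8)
The plan is to estimate the total variation directly from its definition by comparing, for an arbitrary partition $b = (b_0,\dots,b_n)$ of $I$, the sum $\sum_{k<n}\normx{G(b_{k+1})-G(b_k)}{L^1(J;F)}$ against the right-hand side of \cref{e:TV-l1}. First I would pass to a common refinement: since the supremum defining $V_{L^1(J;F)}(G)$ is unchanged if we restrict to partitions containing all the breakpoints $a_0,\dots,a_m$ (adding points only increases the approximating sum), it suffices to bound the sum for partitions $b$ that refine $a$. For such a $b$, every subinterval $[b_k,b_{k+1}]$ lies inside some $[a_i,a_{i+1}]$, so on each $[b_k,b_{k+1}]\times J$ the function $\tilde G$ is continuously partially differentiable in the first variable.

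The key step is then, for a fixed $k$ and a fixed $u \in J$, to write
\[
\tilde G(b_{k+1},u) - \tilde G(b_k,u) = \int_{b_k}^{b_{k+1}} D_1\tilde G(t,u)\,d\lambda(t),
\]
which is the one-dimensional fundamental theorem of calculus applied to the $C^1$ function $t\mapsto \tilde G(t,u)$ on $[b_k,b_{k+1}]$ (valid since $[b_k,b_{k+1}]$ sits in a single $[a_i,a_{i+1}]$; the null set of non-differentiability points coming from the finitely many breakpoints is irrelevant to the integral). Taking $\normx{\cdot}F$, integrating over $u \in J$, and applying the triangle inequality for the Bochner integral together with Tonelli's theorem for the nonnegative measurable function $\norm{D_1\tilde G}$ on $[b_k,b_{k+1}]\times J$ yields
\[
\normx{G(b_{k+1})-G(b_k)}{L^1(J;F)} \le \int_{[b_k,b_{k+1}]\times J}\norm{D_1\tilde G}\,d\lambda^2.
\]
Summing over $k<n$ and using that the subintervals $[b_k,b_{k+1}]$ tile $I$ gives $\sum_{k<n}\normx{G(b_{k+1})-G(b_k)}{L^1(J;F)} \le \int_{I\times J}\norm{D_1\tilde G}\,d\lambda^2$. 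Since this bound is independent of $b$, taking the supremum over all partitions gives \cref{e:TV-l1}.

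The main obstacle is purely measure-theoretic bookkeeping rather than anything deep: I need $D_1\tilde G$ (extended by zero off the set of partial differentiability) to be a Bochner measurable, hence genuinely integrable-or-not, function on $I\times J$ so that $\int_{I\times J}\norm{D_1\tilde G}\,d\lambda^2$ makes sense in $[0,\infty]$ and Tonelli applies. On each slab $[a_i,a_{i+1}]\times J$ this follows because $D_1\tilde G$ is there the pointwise limit of difference quotients $n(\tilde G(t+\tfrac1n,u)-\tilde G(t,u))$ of continuous — hence Bochner measurable — functions, and a countable union of measurable pieces glued along null sets is measurable; I would state this and relegate the verification to the reader, as the paper does with similar routine points. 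One should also note at the outset that the inequality is trivial when the right-hand side is infinite, so there is no harm in working throughout in $[0,\infty]$.
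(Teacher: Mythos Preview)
Your proposal is correct and follows essentially the same route as the paper: bound the variation sum pointwise in $u$ via the fundamental theorem of calculus for $t\mapsto\tilde G(t,u)$, then integrate over $J$ using Tonelli. The only organizational difference is that the paper, instead of refining the partition $b$ to contain the breakpoints $a_0,\dots,a_m$, bounds $\sum_{k<n}\normx{\tilde G(b_{k+1},u)-\tilde G(b_k,u)}F$ by the total variation $V_F(\tilde G(\_,u))$ and then invokes \cref{c:TV-pwc1} to identify that variation with $\int_I\norm{D_1\tilde G(\_,u)}\,d\lambda$; this absorbs the piecewise-$C^1$ bookkeeping into the cited corollary, whereas you inline it via the refinement step.
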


\begin{proof}
Let $n \in \N$ and $b \in \mathcal S_n(I)$. Then, for all $u \in J$,
\begin{align*}
\paren*{\sum_{k<n}\normx{G(b_{k+1}) - G(b_k)}F}(u)
&= \sum_{k<n} \normx{\tilde G(b_{k+1},u) - \tilde G(b_k,u)}F \\
&= \sum_{k<n} d_F(\tilde G(\_,u)(b_k),\tilde G(\_,u)(b_{k+1})) \\
&\le V_F(\tilde G(\_,u)) \\
&= \int_I \normx{D_1\tilde G(\_,u)}F \,d\lambda =: T(u),
\end{align*}
where we have employed \cref{c:TV-pwc1} in conjunction with the fact that $(\tilde G(\_,u))' = (D_1\tilde G)(\_,u)$ in the last line.

By our assumption on $\tilde G$, we know that the function $\norm{D_1\tilde G}$ is Lebesgue measurable on $I \times J$; in fact, for all $i<m$, the restriction of $\norm{D_1\tilde G}$ to $[a_i,a_{i+1}] \times J$ differs from a continuous function on a subset of $\{a_i,a_{i+1}\} \times J$ (i.e., on a null set). Thus due to Tonelli's theorem for nonnegative functions, the function $T$ is Lebesgue measurable on $J$; in fact, one can prove $T$ to be continuous. Furthermore,
\begin{align*}
\sum_{k<n} d_{L^1(J;F)}(G(b_k),G(b_{k+1})) &= \sum_{k<n} \normx{G(b_{k+1}) - G(b_k)}{L^1(J;F)} \\
&= \sum_{k<n} \int_J \normx{G(b_{k+1}) - G(b_k)}F \,d\lambda \\
&= \int_J \sum_{k<n}\normx{G(b_{k+1}) - G(b_k)}F \,d\lambda. \\
&\le \int_J T \,d\lambda = \int_{I\times J} \normx{D_1\tilde G}F \,d\lambda^2.
\end{align*}
The very last equality is again due Tonelli's theorem. As $n$ and $b$ were arbitrary, \cref{e:TV-l1} follows taking into account the definition of the total variation.
\end{proof}

Let $E$ and $F$ be normed vector spaces. Then $\Lin(E,F)$ denotes the normed vector space of continuous linear maps from $E$ to $F$. Moreover, $E \times F$ denotes the cartesian product in the sense of normed vector spaces where we use the (hyper) $1$-norm; that is,
$
\normx{(x,y)}{E\times F} = \normx{x}{E} + \normx{y}{F}
$
for all $x\in E$ and all $y \in F$. An equivalent norm would do equally fine.

Formally we deal with Banach manifolds and Banach bundles below. Since my considerations are of local nature, however, the general formalism \cite{MR1666820} might seem a bit excessive. A \emph{connection} is meant to be a linear connection (the latter in the sense of, e.g., Vilms \cite[236]{MR0229168}). When $E$ and $F$ are Banach spaces, $M$ is an open subset of $F$, viewed as a manifold, and $\bbE$ is the trivial Banach bundle with fiber $E$ over $M$ (i.e., $\bbE = M\times E \to M$), a connection on $\bbE$ corresponds univocally to a map
\[
\omega \colon M \to \Lin(F,\Lin(E,E)).
\]
Note that Vilms \cite{MR0229168} writes $\omega$ in a conjugated form---namely, as a map from $M \times E$ to $\Lin(F,E)$---since he must too account for nonlinear connections.

\begin{theo}
\label{t:bounds}
Let $E$ and $F$ be Banach spaces, $M \subset F$ open, $J \subset \R$ a bounded open interval,
\[
\omega \colon M \times J \to \Lin(F\times\R,\End(E))
\]
a bounded $C^1$ function such that $D_1\omega_2$ is bounded when $\omega_2$ denotes the second component of $\omega$ according to the natural decomposition
\[
\Lin(F\times\R,\End(E)) \cong \Lin(F,\End(E)) \oplus \Lin(\R,\End(E)).
\]
Let $\bbE$ be the trivial Banach bundle with fiber $E$ over $M\times J$ and $P$ the parallel transport in $\bbE$ with respect to the connection given by $\omega$.

Then there exists a monotonic $C^\infty$ function $\beta \colon \R \to (0,\infty)$ such that, for all real numbers $a\le b$ and all $\gamma \in C^1_{pw}([a,b];F\times \R)$ with $\gamma([a,b]) \subset M \times J$, we have
\begin{equation}
\label{e:bounds}
\normx{P_\gamma}{\Lin(\bbE_{\gamma(a)},\bbE_{\gamma(b)})} \le \beta(L(\gamma_1)),
\end{equation}
where $\gamma_1$ denotes the composition of $\gamma$ and the projection $\pi_1 \colon F \times \R \to F$ to the first factor and $L$ denotes the arc length of paths in $F$.\footnote{Note that $L$ is the same thing as the total variation $V_F$, just with a different name.}
\end{theo}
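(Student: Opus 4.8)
The plan is to realize $P_\gamma$ as the value at $(b,a)$ of the evolution operator of a linear evolution equation on $[a,b]$, to split its generator into a part driven by the first component $\gamma_1$ of $\gamma$ and a part driven by the ($J$-valued) second component $\gamma_2$, to bound the latter by \cref{t:main}, and to treat the former as a perturbation via \cref{l:compare}. Concretely, in the given trivialization $\bbE = (M\times J)\times E$ the parallel transport along $\gamma = (\gamma_1,\gamma_2)$ is $P_\gamma = X(b,a)$, where $X$ is the evolution operator for
\[
A \colon [a,b] \to \End(E), \quad A(t) = -\omega(\gamma(t))(\gamma'(t)) = A_1(t) + A_2(t),
\]
with, using $\gamma'(t) = (\gamma_1'(t),\gamma_2'(t))$, the decomposition $\omega = (\omega_1,\omega_2)$, and linearity in the $\R$-slot,
\[
A_1(t) = -\omega_1(\gamma(t))(\gamma_1'(t)), \qquad A_2(t) = -\gamma_2'(t)\cdot\omega_2(\gamma_1(t),\gamma_2(t))(1).
\]
Since $\gamma \in C^1_{pw}([a,b];F\times\R)$, the derivatives $\gamma_1',\gamma_2'$ are bounded and integrable, $\gamma_2 \in AC([a,b])$ with $\gamma_2([a,b])\subset J$, and boundedness and continuity of $\omega$ give $A_1,A_2,A \in L^1([a,b];\End(E))$; so the relevant evolution operators exist by \cref{t:evolop}, and $\norm{P_\gamma}$ equals $\norm{X(b,a)}$ computed in $\End(E)$ (both $X(b,a)$ and $X(a,b)=X(b,a)^{-1}$ will get bounded, covering either orientation convention).

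Next I would apply \cref{t:main} to $A_2$, with $I := [a,b]$, $f := \gamma_2$, and $G \colon [a,b] \to \End(E)^J$ defined by $G(t)(u) := -\omega_2(\gamma_1(t),u)(1)$, so that $A_2(t) = f'(t)\cdot G(t)(f(t))$. Its hypotheses hold: as $\omega$ is bounded and $J$ bounded, each $G(t)$ lies in $L^\infty(J;\End(E)) \subset L^1(J;\End(E))$ with $\normx{G(t)}{L^1(J;\End(E))} \le \lambda(J)\sup\norm{\omega_2}$ uniformly in $t$; as $D_1\omega_2$ is bounded, for fixed $u$ the map $t\mapsto\omega_2(\gamma_1(t),u)$ has derivative $D_1\omega_2(\gamma_1(t),u)(\gamma_1'(t))$, whence $d_\infty(G(t'),G(t)) \le \sup\norm{D_1\omega_2}\,\abs*{\int_{t'}^t\norm{\gamma_1'}\,d\lambda}$, giving continuity of $G$ in the supremum distance; and \cref{p:TV-l1} applied to $\tilde G(t,u) := -\omega_2(\gamma_1(t),u)(1)$ — which is piecewise $C^1$ in $t$ since $\gamma_1 \in C^1_{pw}$ and $\omega_2 \in C^1$, with $\partial_t\tilde G(t,u) = D_1\omega_2(\gamma_1(t),u)(\gamma_1'(t))(1)$ — yields, using $L(\gamma_1) = \int_a^b\norm{\gamma_1'}\,d\lambda$ by \cref{c:TV-pwc1},
\[
V_{L^1(J;\End(E))}(G) \le \int_{[a,b]\times J}\norm{D_1\omega_2(\gamma_1(t),u)(\gamma_1'(t))(1)}\,d\lambda^2 \le \sup\norm{D_1\omega_2}\cdot\lambda(J)\cdot L(\gamma_1).
\]
Writing $\bar N := e^{\lambda(J)\sup\norm{\omega_2}}$ and noting that $(N,V)\mapsto N^2e^{N^{3+2N}V}$ is nondecreasing in $N\ge1$ and in $V\ge0$, \cref{t:main} gives, for the evolution operator $X_2$ of $A_2$ and all $s\le t$, the estimate $\norm{X_2(t,s)^{\pm1}} \le P(L(\gamma_1))$, where $P(\ell) := \bar N^2 e^{\bar N^{3+2\bar N}\sup\norm{D_1\omega_2}\,\lambda(J)\,\ell}$; here $\bar N$ depends only on $\omega$ and $J$.

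Finally I would feed $A_1 = A - A_2$ into \cref{l:compare}, taking its data $A_1,A_2,N,\nu_1,\epsilon$ to be $A_2$, $A$, $P(L(\gamma_1))$, $0$, $\pm1$; since $\int_a^b\norm{A_1}\,d\lambda = \int_a^b\norm{\omega_1(\gamma(t))(\gamma_1'(t))}\,d\lambda \le \sup\norm{\omega_1}\cdot L(\gamma_1)$, this gives $\norm{X(t,s)^{\pm1}} \le P(\ell)\,e^{P(\ell)\sup\norm{\omega_1}\,\ell}$ for $s\le t$ and $\ell = L(\gamma_1)$, which in particular bounds $\norm{P_\gamma}$. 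With $D := \max\set{1,\bar N^{3+2\bar N}\sup\norm{D_1\omega_2}\,\lambda(J)}$ and the elementary inequalities $P(\ell)\le\bar N^2e^{D\ell}$ and $\ell e^{D\ell}\le D^{-1}e^{2D\ell}$ (the latter from $x\le e^x$, all $x\in\R$), the right-hand side is, for $\ell\ge0$, at most
\[
\beta(\ell) := \bar N^2\,e^{D\ell}\,\exp\!\paren*{\frac{\bar N^2\sup\norm{\omega_1}}{D}\,e^{2D\ell}},
\]
and $\beta \colon \R \to (0,\infty)$ is $C^\infty$, strictly positive, monotonic on all of $\R$ (a product and composition of nondecreasing positive functions), and depends only on $\omega$ and $J$; this is the asserted $\beta$, and \cref{e:bounds} follows. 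I expect the main obstacle to be the hypothesis-checking for \cref{t:main}, above all extracting the bounded-variation bound for $G$ with respect to $L^1(J;\End(E))$ from the mere boundedness of $D_1\omega_2$ (which is precisely the role of \cref{p:TV-l1}), together with the bookkeeping that keeps $\bar N$, $D$, and hence $\beta$ independent of the curve $\gamma$ and of $a,b$ — the feature that lets a single $\beta$ serve all curves. Once the setup is in place, the analytic substance is entirely supplied by \cref{t:main} and \cref{l:compare}.
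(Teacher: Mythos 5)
Your proposal is correct and takes essentially the same route as the paper: you decompose $A = A_1 + A_2$ along the $F$- and $\R$-directions, apply \cref{t:main} to the $\gamma_2$-driven part $A_2$ with the total variation of $G$ controlled by \cref{p:TV-l1}, and absorb $A_1$ as a perturbation via \cref{l:compare}. The only deviation is cosmetic: your closed form for $\beta$ is massaged (via $P(\ell)\le\bar N^2e^{D\ell}$ and $\ell e^{D\ell}\le D^{-1}e^{2D\ell}$) so that monotonicity on all of $\R$ is manifest, whereas the paper's $\beta(t)=C(t)e^{C(t)B_1t}$ is only obviously monotonic for $t\ge0$.
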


\begin{proof}
By assumption there exist bounds $B_1,B_2,B_{12}\ge 0$ for $\omega_1$, $\omega_2$, and $D_1\omega_2$ respectively. Note that $J$ is bounded, whence $0 \le \lambda(J) < \infty$. Set $N := e^{\lambda(J)B_2}$ and define the function $C$ by
\[
C(t) = N^2e^{N^{3+2N}\lambda(J)B_{12}t}, \quad \forall t\in\R.
\]
Define the function $\beta$ by
\[
\beta(t) = C(t)e^{C(t)B_1t}, \quad \forall t\in \R.
\]
Then evidently, $\beta \colon \R \to (0,\infty)$ is a monotonic function of class $C^\infty$.

Let $a\le b$ be real numbers, $I := [a,b]$, and $\gamma \colon I \to F\times \R$ a piecewise $C^1$ path whose image lies in $M \times J$. Put $\gamma_1 := \pi_1 \circ \gamma$, and define a function $\tilde G$ on $I \times J$ by
\[
\tilde G(t,u) = -\epsilon(\omega_2(\gamma_1(t),u))
\]
for all $t \in I$ and all $u \in J$, where
\[
\epsilon \colon \Lin(\R,\End(E)) \to \End(E), \quad \epsilon(\psi) = \psi(1),
\]
denotes the evaluation at the real number $1$.
Define the function $G$ on $I$ so that $G(t) = \tilde G(t,\_)$ for all $t \in I$. Then $G(t)$ is a continuous and bounded function from $J$ to $\End(E)$ for all $t \in I$. In particular,
\[
G \colon I \to L^\infty(J;\End(E)) \subset L^\infty_{loc}(J;\End(E)) \cap L^1(J;\End(E)),
\]
where we use that $J$ is bounded.

I contend that $G$ is continuous with respect to the supremum distance $d_\infty$ on $\End(E)^J$; for the definition of $d_\infty$ see the discussion before \cref{t:main}. As a matter of fact, as $\gamma_1 \colon I \to F$ is a piecewise $C^1$ path, we know that $D\gamma_1$ is bounded (in norm), say by $C_1 \in \R$. Let $u \in J$. Then the chain rule implies that, for all $\tau \in I$ (except possibly a finite number of points),
\[
(D_1\tilde G)(\tau,u) = - \epsilon \circ (D_1\omega_2)(\gamma_1(\tau),u) \circ (D\gamma_1)(\tau).
\]
Thus
\begin{equation}
\label{e:bounds-1}
\norm{(D_1\tilde G)(\tau,u)} \le B_{12} \norm{(D\gamma_1)(\tau)} \le B_{12}C_1,
\end{equation}
for $\norm{\epsilon} \le 1$. Hence, we have
\[
\norm{\tilde G(t,u) - \tilde G(s,u)} \le \abs{t - s} B_{12}C_1
\]
for all $s,t \in I$ as a consequence of the mean value theorem \cite[XIII, Corollary 4.3]{MR1216137}. As $u \in J$ was arbitrary, we deduce that
\[
d_\infty(G(s),G(t)) = \sup\set{d_{\End(E)}(G(s)(u),G(t)(u)) \mid u \in J} \le \abs{t - s} B_{12}C_1
\]
for all $s,t \in I$. In other words, $G \colon I \to \End(E)^J$ is Lipschitz continuous with respect to the supremum distance on $\End(E)^J$. The ordinary continuity of $G$ follows.

Using the first estimate in \cref{e:bounds-1} again, we obtain
\[
\int_I \norm{(D_1\tilde G)(\_,u)} \,d\lambda \le B_{12} \int_I \norm{\gamma_1'} \,d\lambda = B_{12}L(\gamma_1)
\]
for all $u \in J$. Thus by means of \cref{p:TV-l1} as well as Tonelli's theorem for nonnegative functions,
\[
V_{L^1(J;\End(E))}(G) \le \int_{I\times J} \norm{D_1\tilde G} \,d\lambda^2 \le \lambda(J)B_{12}L(\gamma_1).
\]
In particular we see that $G$ is of bounded variation with respect to $L^1(I;\End(E))$.

Denote $\gamma_2$ the composition of $\gamma$ and the projection $F\times \R \to \R$ to the second factor. Define the function $A_2$ on $I$ by
\[
A_2(t) = \gamma_2'(t)\cdot G(t)(\gamma_2(t)), \quad \forall t\in I.
\]
Then according to \cref{t:main}, $A_2 \in L^1(I;\End(E))$. Moreover, when $X_2$ is an evolution operator for $A_2$ in $E$, the estimate
\[
\norm{X_2(t,s)} \le C(L(\gamma_1)) =: C
\]
holds for all $s,t\in I$ with $s\le t$. Observe here that, for all $t \in I$ and all $u \in J$,
\[
\norm{G(t)(u)} = \norm{\tilde G(t,u)} \le \norm{\omega_2(\gamma_1(t),u)} \le B_2,
\]
so that
\[
\normx{G(t)}{L^1(J;\End(E))} = \int_J \norm{G(t)} \,d\lambda \le \lambda(J)B_2
\]
holds for all $t \in I$.

Let the function $A$ on $I$ be given by
\[
A(t) = - (\omega(\gamma(t)) \circ (D\gamma)(t))(1)
\]
for all $t \in I$. Then
\[
A(t) = - (\omega_1(\gamma(t))\circ (D\gamma_1)(t) + \omega_2(\gamma(t))\circ (D\gamma_2)(t))(1),
\]
whence
\[
(A - A_2)(t) = - (\omega_1(\gamma(t))\circ (D\gamma_1)(t))(1)
\]
as
\[
A_2(t) = -\gamma_2'(t)\cdot (\omega_2(\gamma_1(t),\gamma_2(t)))(1) = -\paren*{\omega_2(\gamma(t)) \circ (D\gamma_2)(t)}(1)
\]
for all $t \in I$. Thus
\[
\int_s^t \norm{A - A_2} \,d\lambda \le \int_s^t B_1 \norm{D\gamma_1} \,d\lambda = B_1 \int_s^t \norm{\gamma_1'} \,d\lambda \le B_1L(\gamma_1)
\]
for all $s,t\in I$ with $s\le t$. Let $X$ be an evolution operator for $A$ in $E$. Then according to \cref{l:compare}, we have
\[
\norm{X(t,s)} \le Ce^{C \int_s^t \norm{A - A_2} \,d\lambda} \le \beta(L(\gamma_1))
\]
for all $s,t \in I$ such that $s\le t$.

By the very definition of the parallel transport in $\bbE$ with respect to the connection given by $\omega$, the function $P_\gamma \colon \bbE_{\gamma(a)} \to \bbE_{\gamma(b)}$ corresponds to $X(b,a) \colon E \to E$, plugging in the Banach space isomorphisms $\bbE_{\gamma(a)} \to E$ and $\bbE_{\gamma(b)} \to E$ which are given by the projection $\bbE = (M\times J)\times E \to E$ to the second factor. Therefore, we deduce \cref{e:bounds}, which was to be demonstrated.
\end{proof}

\begin{exam}
\label{x:topologists_sin}
Let $M,J \subset \R$ be open intervals. Assume that $0 \in M$ and $[-1,1] \subset J$. Let $r$ be a natural number, $\bbE$ the trivial rank-$r$ bundle over $M \times J$, $\nabla$ a smooth (i.e., $C^\infty$) connection on $\bbE$, $a \in M$ with $a<0$, and $\gamma \colon \coint{a,0} \to \R \times \R$ so that
\[
\gamma(t) = \paren*{t,\sin\frac1t}, \quad \forall t \in \coint{a,0}.
\]
I contend there exists a real number $C > 0$ such that, for all $b \in \coint{a,0}$ and all $v \in \bbE_{\gamma(a)}$, we have
\begin{equation}
\label{e:topologists_sin}
\frac1C \norm{v} \le \norm{P_{\gamma_b}(v)} \le C\norm{v}
\end{equation}
where $P$ signifies the parallel transport in $\bbE$ with respect to $\nabla$, and $\gamma_b = \gamma|_{[a,b]}$.

Indeed, \cref{t:bounds} implies the existence of a monotonic function $\beta \colon \R \to (0,\infty)$ such that, for all real numbers $c\le d$ and all piecewise $C^1$ functions $\delta \colon [c,d] \to \R \times \R$ with $\delta([c,d]) \subset [a,b] \times [-1,1]$, one has
\[
\norm{P_\delta} \le \beta(L(\delta_1))
\]
where $\delta_1$ denotes the first component of $\delta$ and $L$ measures the arc length of paths in $\R$. Specifically, we obtain, for all $b \in \coint{a,0}$,
\[
\norm{P_{\gamma_b}} \le \beta(L(\gamma_1)) = \beta(b-a) \le \beta(-a) =: C.
\]
Hence the upper bound in \cref{e:topologists_sin}. The lower bound is obtained looking at inverse path $\gamma_b^{-1}$ of $\gamma_b$ instead of $\gamma_b$, noting that $P_{\gamma_b^{-1}}\circ P_{\gamma_b} = \id_{\bbE_{\gamma(a)}}$.
\end{exam}

\section{Negligible function graphs}
\label{s:neg}

In what follows, a \emph{manifold} is a real differentiable manifold of class $C^\infty$ locally modeled on $\R^n$ for a natural number $n$. A \emph{vector bundle} is understood the same way; it is assumed to be real (as opposed to complex). We do not deal with Banach manifolds and Banach bundles here.

A \emph{connection} on a vector bundle is, still, a continuous linear connection. For $m \in \N$, or $m = \infty$, we say that a connection is \emph{of class $C^m$} when all of its local components \cite{MR0229168} are of class $C^m$. When $M$ is a manifold, $\bbE$ a vector bundle over $M$, and $\nabla$ a connection on $\bbE$, a \emph{$\nabla$-parallel section} in $\bbE$ is a $C^1$ section $\sigma$ in $\bbE$ defined on an open subset $U$ of $M$ such that, for all $p \in U$ and all tangent vectors $e \in T_p(M)$, the covariant derivative of $\sigma$ in the direction of $e$ vanishes. The $\nabla$-parallel sections in $\bbE$ naturally form a subsheaf of the sheaf of $C^1$ sections in $\bbE$.

The following definition stems from a previous paper of mine \cite{extendability}.

\begin{defi}
\label{d:neg}
Let $M$ be a manifold, $F$ a closed subset of $M$, $m \in \N \cup\set\infty$. Then $F$ is called \emph{negligible} in $M$ for all connections of class $C^m$ when, for all vector bundles $\bbE$ over $M$ and all connections $\nabla$ of class $C^m$ on $\bbE$, the restriction map
\[
\rho_{M,M\setminus F} \colon H(M) \to H(M \setminus F)
\]
for the sheaf $H$ of $\nabla$-parallel sections in $\bbE$ is surjective.
\end{defi}

In case $M$ is a, say simply connected\footnote{This assumption can be somewhat weakened.}, second-countable Hausdorff manifold of dimension $\ge2$ and $F$ is a closed submanifold, boundary allowed, of class $C^1$ of $M$, we know \cite{extendability} that $F$ is negligible in $M$ for all connections of class $C^0$ if and only if $M \setminus F$ is dense and connected in $M$. This result relies heavily on the fact that when $F \subset M$ is a closed $C^1$ submanifold, with possible boundary, $F$ can be locally flattened by means of a diffeomorphism. Already when $F$ is only a $C^0$ submanifold of $M$ (an only in the boundary points of $F$), the suggested method of proof fails.

In one of his talks, Antonio J. Di Scala asked whether the closed topologist's sine curve---that is, the closure of the graph of $f(t) = \sin\frac1t$, $t>0$---was negligible in $\R^2$ for all connections of class $C^\infty$. As an application of \cref{t:main}, I prove that this is indeed true (observe the connection with \cref{x:topologists_sin}). More generally, the following holds.

\begin{theo}
\label{t:neg}
Let $M,J \subset \R$ be open intervals, $a \in M$, $M_{>a} = \{x \in M \mid x>a\}$, $f \in C(M_{>a})$ such that $f(M_{>a})$ is relatively compact in $J$---that is, the closure in $J$ of $f(M_{>a})$ is compact.
Then the closure in $M\times J$ of the graph of $f$ is negligible in $M\times J$, thought of as a manifold, for all connections of class $C^1$.
\end{theo}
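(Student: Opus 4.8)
\emph{Proof proposal.}
The plan is to unwind \cref{d:neg} and reduce to the following: for every vector bundle $\bbE$ over $N:=M\times J$, every connection $\nabla$ of class $C^1$ on $\bbE$, and every $\nabla$-parallel section $\sigma$ on $N\setminus F$ — where $F$ is the closure in $N$ of the graph of $f$ — one must produce a $\nabla$-parallel section on $N$ restricting to $\sigma$. Since the task is local on $N$ and $\bbE$ is locally trivial, I may assume $\bbE=N\times E$ and that $\nabla$ is given by a $C^1$ map $\omega\colon N\to\Lin(\R^2,\End(E))$, so that parallel transport along a piecewise $C^1$ path $\gamma=(\gamma_1,\gamma_2)$ in $N$ obeys the evolution equation with generator $t\mapsto\omega(\gamma(t))(\gamma_1'(t),\gamma_2'(t))$. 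One first records the shape of $F$: writing $K\subset J$ for the compact, connected set of cluster values of $f$ as $t\to a^+$, one has $F=\{(t,f(t)):t\in M_{>a}\}\cup(\{a\}\times K)$, the set $F$ lies in a strip $M_{\ge a}\times[c,d]$ with $[c,d]$ compact in $J$, and $N\setminus F$ is dense and connected, being the union of $M_{<a}\times J$, of $\{a\}\times(J\setminus K)$, and of the epigraph and hypograph regions $U_\pm=\{(t,u):t>a,\ \pm(u-f(t))>0\}$, each of which meets $M_{<a}\times J$. Shrinking $J$ to a bounded subinterval (which costs nothing, as $F$ stays in $M_{\ge a}\times[c,d]$) I fix a basepoint $q_0=(t^*,d+1)\in N\setminus F$ with $t^*<a$.

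For $q=(t,u)\in N\setminus F$ I single out a piecewise-linear path $\gamma_q\subset N\setminus F$ from $q_0$ to $q$: if $q$ lies in $U_+$ or in $M_{<a}\times J$, go from $q_0$ horizontally at height $d+1$ to $(t,d+1)$, then vertically down to $(t,u)$; if $q$ lies in $U_-$, go instead via height $c-1$ (and for $q\in\{a\}\times(J\setminus K)$ use whichever is appropriate). Each such $\gamma_q$ avoids $F$, and as $q\to p\in F$ the path $\gamma_q$ converges uniformly, as a piecewise $C^1$ path, to a definite piecewise-linear path $\gamma_p$ in $N$ — which may now meet $F$. Because $\sigma$ is $\nabla$-parallel on $N\setminus F$ we have $\sigma(q)=P_{\gamma_q}(\sigma(q_0))$ (with $P$ denoting parallel transport), so continuity of $P$ in the path gives a one-sided limit $\lim_{q\to p}\sigma(q)=P_{\gamma_p}(\sigma(q_0))$ along each of the sides $M_{<a}\times J$, $U_+$, $U_-$. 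The heart of the matter is that these one-sided limits coincide, equivalently that the loop obtained by concatenating one of the $\gamma_p$ with the reverse of another fixes $\sigma(q_0)$. With the routing above, the $U_+$- and $M_{<a}$-limits have \emph{the same} limit path and thus coincide automatically; and whenever the comparison loop $\ell$ meets $F$ only along a vertical piece — this occurs at $p=(a,u_0)$, $u_0\in K$, where the offending part of $\ell$ sits on $\{a\}\times K$ strictly inside the right edge of a coordinate rectangle — one perturbs $\ell$ slightly into the region $\{t<a\}$, getting loops $\ell_\epsilon\to\ell$ that lie entirely in $N\setminus F$; then $P_{\ell_\epsilon}(\sigma(q_0))=\sigma(q_0)$ since $\sigma$ is parallel off $F$, and the limit $\epsilon\to0$ yields $P_\ell(\sigma(q_0))=\sigma(q_0)$.

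The genuinely hard case is $p=(t_0,f(t_0))$ with $t_0>a$, where the comparison loop crosses the graph transversally at $p$ and \emph{cannot} be perturbed off $F$ locally, because the graph of a merely continuous $f$ still separates a neighbourhood of $p$ into $U_+$ and $U_-$; this is exactly where \cref{t:main}, via \cref{t:bounds}, is indispensable. Rather than tending to $p$ along the line $t=t_0$, I would join $(t_0,f(t_0)+\eta)\in U_+$ to $(t_0,f(t_0)-\eta)\in U_-$ by a path $\gamma^\eta\subset N\setminus F$ that runs leftward alongside the graph at signed vertical distance $\approx\pm\eta$ (through a piecewise $C^1$ approximation), wraps around the thicket $\{a\}\times K$ in the region $\{t<a\}$, and returns rightward alongside the graph. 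Its first-coordinate projection $\gamma_1^\eta$ has total variation at most $2(t_0-a)+o(1)$, independently of $\eta$ and of the oscillation of $f$, so \cref{t:bounds} gives $\norm{P_{\gamma^\eta}}\le\beta(2(t_0-a)+1)$ uniformly in $\eta$ — the crucial point being that the bound sees only $L(\gamma_1)$, not the (possibly infinite) length of $\gamma$. Feeding this into \cref{l:compare} to compare $P_{\gamma^\eta}$ with $P_{\gamma^{\eta'}}$, and taking the two paths to be vertical translates of each other so that the uniformity in the "scalar function" in \cref{t:main} applies, one shows the relevant $L^1$-distance of generators tends to $0$; hence $P_{\gamma^\eta}$ converges as $\eta\to0$ to parallel transport along a loop $\gamma^0$ at $p$, whose holonomy is then identified (the wrapping producing there-and-back cancellations, and $\gamma^0$ being approximable by loops in $N\setminus F$) as fixing $v_+(p):=\lim_{U_+}\sigma$. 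Since $\sigma(t_0,f(t_0)-\eta)=P_{\gamma^\eta}(\sigma(t_0,f(t_0)+\eta))$ and $\sigma(t_0,f(t_0)\pm\eta)\to v_\pm(p)$, this forces $v_-(p)=v_+(p)$.

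Granting all this, $\sigma$ extends to a section $\tilde\sigma$ on $N$; it is continuous — along $F$ because the limit paths $\gamma_p$ depend continuously on $p$ — and restricts to $\sigma$ on the dense open set $N\setminus F$. It remains to see that $\tilde\sigma$ is of class $C^1$ and $\nabla$-parallel, which I would obtain by checking that near each $p\in F$ the section $\tilde\sigma$ agrees with the parallel section through $\tilde\sigma(p)$, again using parallel transport along short paths together with the boundedness from \cref{t:bounds} (or merely \cref{c:standard_estimate} away from the graph). Then $\tilde\sigma\in H(N)$ restricts to $\sigma$, so $F$ is negligible in $M\times J$ for all connections of class $C^1$. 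The step I expect to be the main obstacle is, as indicated, the graph case $p=(t_0,f(t_0))$, $t_0>a$: making rigorous the convergence $P_{\gamma^\eta}\to P_{\gamma^0}$ and the identification of the limiting holonomy is precisely the job the uniform, $f$-independent stability estimate of \cref{t:main} is built to do, and the non-rectifiability of the graph of a continuous function is the reason nothing cruder will suffice.
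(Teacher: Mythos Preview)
Your route is genuinely different from the paper's, and the hard case $p=(t_0,f(t_0))$, $t_0>a$, carries a real gap. For the curves $(t,g(t)\pm\eta)$ to stay in $U_\pm$ you need $\lVert g-f\rVert_\infty<\eta$, so as $\eta\to0$ the approximation $g=g_\eta$ must change with $\eta$; ``vertical translates of each other'' is therefore impossible once $\eta$ drops below $\lVert g-f\rVert_\infty$ for a fixed $g$. \Cref{t:bounds} does deliver the uniform bound $\lVert P_{\gamma^\eta}\rVert\le\beta(2(t_0-a)+1)$, but that is only boundedness, not convergence. To invoke \cref{l:compare} you would need $\int\lVert A^\eta-A^{\eta'}\rVert\,d\lambda$ small, and since the generator along $(t,g_\eta(t)\pm\eta)$ is $-\omega(t,g_\eta(t)\pm\eta)(1,g_\eta'(t))$, comparing two such generators produces terms of order $\lvert\eta-\eta'\rvert\int(1+\lvert g_\eta'\rvert)$ and terms in $g_\eta'-g_{\eta'}'$; neither is controlled---for $f(t)=\sin(1/t)$ the total variation $\int\lvert g_\eta'\rvert$ necessarily blows up as $\eta\to0$. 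The ``there-and-back cancellation'' for $\gamma^0$ is likewise ill-posed, since $f$ need have no limit at $a^+$ and the putative limit path is not a curve. Finally, your upgrade of a continuous $\tilde\sigma$ to a $C^1$ parallel section by comparison with ``the parallel section through $\tilde\sigma(p)$'' begs the question: such a local section exists only if small-loop holonomy at $p$ fixes $\tilde\sigma(p)$, which is what you are trying to establish.

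The paper avoids all of this by constructing the extension \emph{before} any limiting along the graph. Via \cref{t:parameters} it solves the vertical equation $D_2\xi_j+\omega_2\xi_j=0$ with initial data $\sigma(\cdot,v_j)$ on two horizontal lines $v=v_0$ and $v=v_1$ lying below and above the strip containing $F$ (hence inside $U$); this produces globally $C^1$ functions $\xi_0,\xi_1$ on all of $M\times J$, with $\xi_j=\sigma$ on the corresponding side of the graph and on $M_{<a}\times J$. The defects $\theta_j:=D_1\xi_j+\omega_1\xi_j$ are then continuous on $M\times J$ and vanish off the graph, hence also on the graph and on $\{a\}\times J$ by continuity. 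For fixed $b>a$ and $\epsilon>0$ one chooses via \cref{l:approximation} a \emph{single} $C^1$ function $h$ with $h(b)=f(b)$ whose graph lies in $\{\lVert\theta_1-\theta_0\rVert<\epsilon\}$; setting $\gamma(t)=(t,h(t))$, the difference $\Delta:=(\xi_1-\xi_0)\circ\gamma$ satisfies $\Delta'=A\Delta+(\theta_1-\theta_0)\circ\gamma$ with $\Delta(a)=0$, and variation of parameters together with the uniform bound $\lVert X(b,\cdot)\rVert\le\beta(b-a)$ from \cref{t:bounds} yields $\lVert\Delta(b)\rVert\le(b-a)\beta(b-a)\epsilon$. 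Only the uniform \emph{bound} is ever used---no convergence of transports along varying paths---and the $C^1$ regularity of the extension $\xi_0$ is free from \cref{t:parameters}.
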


An indispensable tool in the proof of \cref{t:neg} is the solving of parameter-dependent linear evolution equations.

\begin{theo}
\label{t:parameters}
Let $J \subset \R$ be an open interval, $v_0 \in J$, $E$ and $F$ Banach spaces, $M$ an open subset of $F$, $A \colon M \times J \to \End(E)$ a continuous map.
\begin{enumerate}
\item \label{i:parameters-ex} There exists a unique continuous map $X \colon M \times J \to \End(E)$ which is partially differentiable with respect to the second variable such that
\[
(D_2X)(p) = A(p) \circ X(p), \quad \forall p \in M\times J,
\]
and
\[
X(x,v_0) = \id_E, \quad \forall x \in M.
\]
\item \label{i:parameters-c1} When $A$ is of class $C^1$, then the $X$ from \cref{i:parameters-ex} is of class $C^1$.
\end{enumerate}
\end{theo}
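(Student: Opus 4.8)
The plan is to reduce the parameter-dependent equation to the theory already developed, treating the parameter essentially as a spectator. First I would address existence and uniqueness in \cref{i:parameters-ex}. For each fixed $x \in M$, the function $J \to \End(E)$, $v \mapsto A(x,v)$ is continuous, hence lies in $L^1_{loc}(J;\End(E))$, so \cref{i:evolop-ex} of \cref{t:evolop} furnishes a unique evolution operator $Y_x$ for this equation on $J$. Setting $X(x,v) := Y_x(v,v_0)$ then yields a function satisfying $X(x,v_0) = \id_E$ by definition and $(D_2X)(x,v) = A(x,v)\circ X(x,v)$ because $Y_x(\_,v_0)$ is an operator solution and continuity of $v \mapsto A(x,v)$ upgrades the integral equation to a genuine (everywhere) derivative. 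Uniqueness of $X$ with these properties follows, for each fixed $x$, from the uniqueness clause of \cref{t:evolop}. So the only real content in \cref{i:parameters-ex} is the joint continuity of $(x,v) \mapsto X(x,v)$, not merely continuity in $v$.

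For joint continuity I would fix $(x_0,v_0') \in M \times J$, pick a compact interval $K \subset J$ containing both $v_0$ and $v_0'$ in its interior (relative to $J$), and a closed ball $\bar B(x_0,r) \subset M$. On the compact set $\bar B(x_0,r) \times K$ the map $A$ is bounded and uniformly continuous; write $\|A\| \le M_0$ there. For $x, x' \in \bar B(x_0,r)$ I would apply \cref{l:compare} with $A_1(\_) = A(x,\_)$, $A_2(\_) = A(x',\_)$ on $K$, $N = 1$, $\nu_1 = 0$, $\epsilon = +1$ (so $\|Y_x(t,s)\| \le e^{M_0\lambda(K)}$ by \cref{c:standard_estimate}), obtaining
\[
\norm{X(x',v) - X(x,v)} = \norm{Y_{x'}(v,v_0) - Y_x(v,v_0)} \le e^{M_0\lambda(K)}\paren*{e^{e^{M_0\lambda(K)}\int_K \norm{A(x',\_) - A(x,\_)}\,d\lambda} - 1}.
\]
The integrand is bounded by $\sup_{v \in K}\norm{A(x',v) - A(x,v)}$, which tends to $0$ as $x' \to x$ by uniform continuity of $A$ on the compact set; combining with continuity of $v \mapsto X(x_0,v)$ and a triangle inequality gives joint continuity at $(x_0,v_0')$.

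For \cref{i:parameters-c1} I would argue that when $A$ is $C^1$, the map $X$ is $C^1$ by a bootstrapping argument on the ``stacked'' system for $(X, D_2X)$ together with differentiation in the parameter. The cleanest route: differentiate the defining equation formally in the first variable, guess that $D_1X$ (a map into $\Lin(F,\End(E))$) solves the linearized equation $D_2(D_1X)(p) = A(p)\circ D_1X(p) + (D_1A)(p)\circ X(p)$ with initial value $D_1X(x,v_0) = 0$, solve this inhomogeneous linear evolution equation (by the variation-of-constants formula, using the already-constructed $X$ as fundamental solution), check by the first part that the resulting candidate is jointly continuous, and then verify rigorously that it is indeed the partial derivative $D_1X$ via an integral-remainder estimate driven again by \cref{l:compare}. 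Similarly $D_2X = (A \circ \mathrm{pr})\cdot X$ is continuous since $A$ and $X$ are, so all first partials exist and are continuous, whence $X \in C^1$. I expect the main obstacle to be this last step: rigorously establishing that the candidate for $D_1X$ really is the Fréchet partial derivative (uniformly in $v$ on compacta) rather than merely a formal solution of the variational equation — this is where one must control the difference quotient $\tfrac1h(X(x+he,\cdot) - X(x,\cdot))$ against the candidate using Gronwall-type bounds on the evolution operators, exactly the kind of estimate \cref{l:compare} is built to supply.
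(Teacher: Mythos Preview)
The paper does not actually prove this theorem: its ``proof'' consists solely of a citation to \cite[Theorem 3.1]{extendability} and Lang \cite[IV, \S1]{MR1666820}. Your sketch is therefore not competing with an argument in the paper but rather reconstructing the standard one that those references contain, and in outline you have it right: build $X$ fibrewise from \cref{t:evolop}, get joint continuity from a perturbation estimate, and for the $C^1$ part solve the variational equation and verify it gives the Fr\'echet partial derivative.

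Two small slips are worth fixing. First, in your application of \cref{l:compare} you write $N=1$ but then (correctly) use $N = e^{M_0\lambda(K)}$ in the displayed estimate; the latter is what the hypothesis of \cref{l:compare} actually requires, since $\norm{Y_x(t,s)}$ is bounded by $e^{M_0\lambda(K)}$, not by $1$. Second, and more substantively, you invoke ``the compact set $\bar B(x_0,r)\times K$'' to get uniform continuity and boundedness of $A$, but $F$ is an arbitrary Banach space, so closed balls in $F$ need not be compact. The conclusion you want---that $\sup_{v\in K}\norm{A(x',v)-A(x_0,v)}\to 0$ as $x'\to x_0$---still holds, but it follows from compactness of $K$ alone together with continuity of $A$ (a finite-subcover argument over $K$), not from compactness of $\bar B(x_0,r)$. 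With that repair your continuity argument goes through, and the $C^1$ step is exactly the strategy Lang carries out.
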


\begin{proof}
See \cite[Theorem 3.1]{extendability} where I comment on Lang's exposition \cite[IV, \S1]{MR1666820}.
\end{proof}

The following approximation lemma permits us to look at arbitrary continuous functions, as opposed to only $C^1$ functions, $f$ in \cref{t:neg}.

\begin{lemm}
\label{l:approximation}
Let $I \subset \R$ be a compact interval, $b \in I$, $f \in C(I)$, $U \subset \R^2$ open such that $U$ contains the graph of $f$.\footnote{Note that in ZF set theory, $f$, as a set, \emph{is} the graph of $f$.} Then there exists $g \in C^\infty(I)$ such that $g(b) = f(b)$ and the graph of $g$ is contained in $U$.
\end{lemm}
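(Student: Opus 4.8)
\textbf{Proof proposal for \Cref{l:approximation}.}

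The plan is to produce $g$ as a perturbation of a \emph{standard} smooth approximation of $f$ that has been corrected to interpolate the value at $b$. First I would note that, since $I$ is a compact interval and $U$ is open containing the compact set $\{(t,f(t)) : t \in I\}$ (the graph of a continuous function on a compact interval is compact), the usual tube-lemma type argument yields a number $\varepsilon > 0$ such that $\{(t,u) : t \in I,\ \abs{u - f(t)} \le \varepsilon\} \subset U$. This reduces the problem to finding $g \in C^\infty(I)$ with $g(b) = f(b)$ and $\norm{g - f}_\infty < \varepsilon$ (uniform norm over $I$).

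Next I would invoke the Stone--Weierstrass theorem (or, concretely, mollification after extending $f$ continuously to a slightly larger interval, or polynomial approximation) to obtain $h \in C^\infty(I)$ with $\norm{h - f}_\infty < \varepsilon/2$. This $h$ need not satisfy $h(b) = f(b)$, but the defect $\delta := f(b) - h(b)$ has $\abs{\delta} \le \norm{h-f}_\infty < \varepsilon/2$. I then set
\[
g(t) := h(t) + \delta,\quad \forall t \in I.
\]
This is again of class $C^\infty$ on $I$; it satisfies $g(b) = h(b) + \delta = f(b)$; and for all $t \in I$ we have $\abs{g(t) - f(t)} \le \abs{h(t) - f(t)} + \abs{\delta} < \varepsilon/2 + \varepsilon/2 = \varepsilon$. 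Hence $(t,g(t))$ lies in the $\varepsilon$-tube around the graph of $f$, which is contained in $U$, so the graph of $g$ is contained in $U$, as required.

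I do not expect a genuine obstacle here; the only point requiring a little care is the existence of the uniform tube of width $\varepsilon$ inside $U$, which is where compactness of $I$ (and hence of the graph of $f$) is essential—if $I$ were non-compact the statement would fail. A cosmetic alternative to the additive correction $g = h + \delta$ would be $g(t) = h(t) + \delta\,\psi(t)$ for a fixed bump function $\psi \in C^\infty(I)$ with $\psi(b) = 1$ and $\norm{\psi}_\infty \le 1$, which localizes the correction near $b$; but for the statement as given, the constant shift already does everything.
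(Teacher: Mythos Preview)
Your proof is correct and follows essentially the same route as the paper: obtain a uniform $\varepsilon$-tube around the graph inside $U$ via compactness, approximate $f$ uniformly to within $\varepsilon/2$ by a smooth (in the paper, polynomial) function, and then add the constant $f(b)-h(b)$ to fix the value at $b$. The only cosmetic difference is that the paper carries out the tube argument by hand with a finite cover of rectangles rather than invoking the tube lemma.
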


\begin{proof}
First of all, observe that, for all $t \in I$, there exist numbers $\delta,\epsilon > 0$ such that
\[
f|_{B_\delta(t)} \subset B_\delta(t) \times B_\epsilon(f(t)) \quad \text{and} \quad B_\delta(t) \times B_{2\epsilon}(f(t)) \subset U,
\]
where $B$ denotes open Euclidean balls in $\R$. In particular, the sets $B_\delta(t)$ associated to such “distinguished” triples $(t,\delta,\epsilon)$ furnish an open cover of $I$ in $\R$. As $I$ is compact in $\R$, there exists a natural number $m$ and an $m$-tuple $((t_i,\delta_i,\epsilon_i))_{i<m}$ of distinguished triples such that
\[
I \subset \bigcup_{i<m} B_{\delta_i}(t_i).
\]

Since $b \in I$, we have $m>0$. Set $\epsilon := \min_{i<m}(\epsilon_i)$. Let $(s,v) \in I \times \R$ such that $\abs{v - f(s)} < \epsilon$. Then there exists an index $i<m$ such that $s \in B_{\delta_i}(t_i)$. It is implied that
\[
\abs{v - f(t_i)} \leq \abs{v - f(s)} + \abs{f(s) - f(t_i)} < \epsilon + \epsilon_i \leq 2\epsilon_i,
\]
whence $(s,v) \in U$.

By the Weierstraß approximation theorem, noting $\epsilon > 0$, there exists a polynomial function $p$ on $I$ such that, for all $s \in I$, we have $\abs{p(s) - f(s)} < \frac\epsilon2$. Define
\[
g \colon I \to \R, \quad g = (f(b) - p(b)) + p.
\]
Then $g \in C^\infty(I)$. Moreover, $g(b) = f(b)$ and, for all $s \in I$,
\[
\abs{g(s) - f(s)} \leq \abs{f(b) - p(b)} + \abs{p(s) - f(s)} < \frac\epsilon2 + \frac\epsilon2 = \epsilon,
\]
so that $(s,g(s)) \in U$, which was to be demonstrated.
\end{proof}

\begin{proof}[Proof of \cref{t:neg}]
Let $\bbE$ be a vector bundle over $M\times J$, $\nabla$ a connection of class $C^1$ on $\bbE$. Then, as $M\times J$ is paracompact, Hausdorff, and $C^\infty$ contractible, the vector bundle $\bbE$ is trivial---that is, there exists a number $r \in \N$ and a vector bundle isomorphism $\psi \colon \bbE \to (M\times J) \times \R^r$. Let
\[
\omega \colon M \times J \to \Lin(\R \times \R,\End(\R^r))
\]
be the local component of $\nabla$ with respect to $\psi$. Denote $\omega_1$ and $\omega_2$ the components of $\omega$ with respect to the natural decomposition
\[
\Lin(\R \times \R,\End(\R^r)) \cong \Lin(\R,\End(\R^r)) \oplus \Lin(\R,\End(\R^r)) \cong \End(\R^r) \oplus \End(\R^r),
\]
where the second isomorphism is given by the evaluation at $1 \in \R$.

Let $\tilde\sigma \in H(U)$ where $H$ is the sheaf of $\nabla$-parallel sections in $\bbE$, $F$ is the closure in $M \times J$ of the graph of $f$, and $U := (M\times J)\setminus F$. Denote $\sigma$ the composition of $\tilde\sigma$, $\psi$, and the projection to the second factor (i.e., to $\R^r$). Then the fact that $\nabla(\tilde\sigma) = 0$ in the covariant derivative sense implies
\[
D\sigma + \omega\sigma = 0,
\]
as functions from $U$ to $\Lin(\R\times\R,\R^r)$.
Passing to the components and evaluating at $1 \in \R$ as above, we obtain, for $i = 1,2$,
\[
D_i\sigma + \omega_i\sigma = 0,
\]
as functions from $U$ to $\R^r$. Here $(\omega_i\sigma)(p) = \omega_i(p)(\sigma(p))$ for all $p \in U$ and $i = 1,2$.

Since $f(M_{>a})$ is relatively compact in $J$, there exist elements $v_0<v_1$ in $J$ such that $\bar{f(M_{>a})}$ is contained in the open interval between $v_0$ and $v_1$. In particular,
\[
F \subset \{x \in M \mid x\ge a\} \times \bar{f(M_{>a})} \subset \{x \in M \mid x\ge a\} \times (v_0,v_1).
\]
Let $j\in\set{0,1}$. Then by \cref{t:parameters}, as $\omega_2$ is of class $C^1$, there exists a (unique) $C^1$ function
\[
Y_j \colon M \times J \to \End(\R^r)
\]
such that
\[
D_2Y_j + \omega_2Y_j = 0
\]
holds on $M\times J$
and $Y_j(\_,v_j)$ is constantly equal to $\id_{\R^r}$. Since $M\times\set{v_j} \subset U$, there exists a function $\xi_j$ on $M\times J$ such that
\[
\xi_j(x,v) = Y_j(x,v)(\sigma(x,v_j)), \quad \forall (x,v) \in M\times J.
\]
Clearly, $\xi_j \in C^1(M\times J;\R^r)$.
In particular, $\xi_j$ is partially differentiable with respect to the first variable. Put
\[
\theta_j := D_1\xi_j + \omega_1\xi_j.
\]

Define
\[
U_{>a}^j := \{(x,v) \in M_{>a} \times J \mid 0 < (-1)^j(f(x) - v)\}.
\]
Thus $U_{>a}^0$ is the portion of $M\times J$ which lies beneath the graph of $f$, whereas $U_{>a}^1$ is the portion of $M\times J$ which lies above the graph of $f$.
Observe that $\xi_j = \sigma$ holds on $U_{>a}^j$. As a matter of fact, let $x \in M_{>a}$. Then both $\xi_j(x,\_)$ and $\sigma(x,\_)$ are vector solutions for $-\omega_2(x,\_)$ in $\R^r$ when restricted to the interval
\[
\{v \in J \mid 0 < (-1)^j(f(x) - v)\}.
\]
Besides, $\xi_j(x,v_j) = \sigma(x,v_j)$. Thus $\xi_j(x,\_)$ and $\sigma(x,\_)$ agree on the latter interval, for the initial value problem has a unique solution. By the exact same argument, one infers that $\xi_j = \sigma$ on $M_{<a} \times J$ where $M_{<a} := \{x \in M \mid x < a\}$.

In consequence, as $U_{>a}^j$ is open in $M\times J$, or equivalently in $\R\times \R$, we infer that on $U_{>a}^j$,
\[
\theta_j = D_1\sigma + \omega_1\sigma = 0.
\]
The exact same equation holds on $M_{<a} \times J$. As $\theta_j$ is continuous with respect to the second variable, we deduce that $\theta_j(x,f(x)) = 0$ for all $x \in M_{>a}$. As $\theta_j$ is continuous with respect to the first variable, we deduce that $\theta_j(a,v) = 0$ for all $v \in J$.

Let $b \in M_{>a}$. I contend that
\[
\xi_0(b,f(b)) = \xi_1(b,f(b)).
\]
According to \cref{t:bounds}, there exists a monotonic function $\beta\colon\R \to (0,\infty)$ such that, for all real numbers $c\le d$ and all $\delta \in C^1_{pw}([c,d];\R\times\R)$ with $\delta([c,d]) \subset [a,b] \times [v_0,v_1]$, we have
\[
\normx{P_\delta}{\Lin(\R^r,\R^r)} \le \beta(L(\delta))
\]
when $P$ is the parallel transport in $(M\times J)\times \R^r$ with respect to $\omega$, and $\delta_1$ denotes the composition of $\delta$ with the projection to the first factor, and $L$ stands for the arc length of paths in $\R$.

Let $\epsilon > 0$. Put $\theta := \theta_1 - \theta_0$ and
\[
F_\epsilon := \{p \in M\times J \mid \norm{\theta(p)} < \epsilon\}.
\]
Then $F_\epsilon$ is open in $M\times J$, and $F_\epsilon$ contains the graph of $f$ as well as the set $\set{a} \times J$.
Thus employing \cref{l:approximation}, we see there exists a function $h \in C^1_{pw}([a,b])$ such that the graph of $h$ lies in $F_\epsilon \cap (M \times (v_0,v_1))$ and $h(b) = f(b)$. Define the function $\gamma$ on $[a,b]$ by
\[
\gamma(t) = (t,h(t)), \quad \forall t\in [a,b].
\]
Then, for all $t \in [a,b]$, by virtue of the chain rule,
\begin{align*}
(\xi_j\circ \gamma)'(t)
& = (D_1\xi_j)(\gamma(t))\gamma_1'(t) + (D_2\xi_j)(\gamma(t))\gamma_2'(t) \\
& = \theta_j(\gamma(t)) - \omega(\gamma(t))(\gamma'(t))(\xi_j(\gamma(t))).
\end{align*}
Hence,
\[
\Delta'(t) = \theta(\gamma(t)) + A(t)\Delta(t), \quad \forall t \in [a,b],
\]
where
\[
\Delta := (\xi_1 - \xi_0) \circ \gamma = (\xi_1\circ\gamma) - (\xi_0\circ\gamma)
\]
and the function $A$ on $[a,b]$ is given by
\[
A(t) = -\omega(\gamma(t))(\gamma'(t)), \quad \forall t \in [a,b].
\]

By \cref{t:evolop}, there exists an evolution operator $X$ for $A$ in $\R^r$. For all $s,t \in [a,b]$ such that $s\le t$, the operator $X(t,s)$ is by definition the parallel transport along $\gamma|_{[s,t]}$ in $(M\times J)\times \R^r$ with respect to $\omega$. Therefore,
\[
\norm{X(t,s)} = \norm{P_{\gamma|_{[s,t]}}} \le \beta(L(\gamma_1|_{[s,t]})) = \beta(t-s) \le \beta(b-a).
\]
In addition, the variation of parameters formula \cite[101]{MR0352639} yields
\[
\Delta(b) = X(b,s)\Delta(s) + \int_s^b X(b,\_)(\theta\circ\gamma) \,d\lambda
\]
for all $s \in [a,b]$.
Specifically, since
\[
\Delta(a) = \xi_1(a,h(a)) - \xi_0(a,h(a)) = \sigma(a,h(a)) - \sigma(a,h(a)) = 0,
\]
we obtain
\[
\norm{\Delta(b)} = \norm*{\int_a^b X_2(b,\_)(\theta\circ\gamma) \,d\lambda} \leq \int_a^b \beta(b-a)\epsilon \,d\lambda \leq (b-a)\beta(b-a)\epsilon.
\]
Since $\epsilon > 0$ was arbitrary, we deduce $\Delta(b) = 0$, whence $\xi_0(b,f(b)) = \xi_1(b,f(b))$, as claimed.

Since both $\xi_0(b,\_)$ and $\xi_1(b,\_)$ are vector solutions for $-\omega_2(b,\_)$ in $\R^r$, the fact that they agree at one point---namely, at $f(b)$---implies that they agree as such (i.e., as functions).
As moreover $b \in M_{>a}$ was arbitrary, we conclude that $\xi_0 = \xi_1$ holds on all of $M_{>a} \times J$. In turn, $\xi_0|_U = \sigma$. Evidently, there exists a $C^1$ section $\tilde\xi_0 \colon M\times J \to \bbE$ in $\bbE$ such that the composition of $\tilde\xi_0$, $\psi$, and the projection to the second factor (i.e., to $\R^r$) equals $\xi_0$. By construction, we have $\tilde\xi_0|_U = \tilde\sigma$ as well as $\nabla(\tilde\xi_0) = 0$ in the covariant derivative sense; that is, $\tilde\xi_0 \in H(M\times J)$. As $\tilde\sigma \in H(U)$ was arbitrary, this proves the surjectivity of the restriction map
\[
\rho_{M\times J,U} \colon H(M\times J) \to H(U)
\]
for the sheaf $H$. As $\bbE$ and $\nabla$ were arbitrary, we deduce further that $F$ is negligible in $M\times J$ for all connections of class $C^1$, which was to be demonstrated.
\end{proof}

\printbibliography
\end{document}